\newcommand{\N}{{\mathbb N}}
\newcommand{\C}{{\mathbb C}}
\newcommand{\tef}{transcendental entire function}
\theoremstyle{plain}
\newtheorem{theorem}{Theorem}[section]
\newtheorem{corollary}[theorem]{Corollary}
\newtheorem{proposition}[theorem]{Proposition}
\newtheorem{lemma}[theorem]{Lemma}
\theoremstyle{definition}
\theoremstyle{remark}
\theoremstyle{problem}
\theoremstyle{example}
\newcommand{\dist}{\textrm{dist\,}}
\newcommand{\ind}{\textrm{ind\,}}
\newcommand{\ov}{\overline}
\newcommand{\rnN}{r_{n+N}}
\newcommand{\AnN}{A_{n+N}}
\newcommand{\rnu}{r_{n+1}}
\newcommand{\CnN}{C_{n+N}}
\newcommand{\dnu}{\ell_{\gamma,n}}
\newcommand{\dnd}{\ell_{C,n}}
\title[Permutable entire functions]{Permutable entire functions and multiply connected wandering domains}
\author{Anna Miriam Benini}
\address{Dipartimento di Matematica\\
Universit\`a di Roma `Tor Vergata'\\
v. ricerca scientifica 1\\
Roma\\
Italy}
\email{ambenini@gmail.com}
\author{Philip J. Rippon}
\address{Department of Mathematics and Statistics \\
The Open University \\
   Walton Hall\\
   Milton Keynes MK7 6AA\\
   UK}
\email{Phil.Rippon@open.ac.uk}
\author{Gwyneth M. Stallard}
\address{Department of Mathematics and Statistics \\
The Open University \\
   Walton Hall\\
   Milton Keynes MK7 6AA\\
   UK}
\email{Gwyneth.Stallard@open.ac.uk}
\thanks{2010 {\it Mathematics Subject Classification.}\; Primary 37F10, Secondary 30D05, 30D20.\\The first author is  supported by the ERC grant HEVO: Holomorphic Evolution Equations, n.~277691, and the last two authors by the EPSRC grant EP/K031163/1.}
\keywords{transcendental entire function, Julia set, permutable functions, multiply connected wandering domain, fast escaping set}
\begin{document}

\maketitle
\begin{abstract} Let~$f$ and~$g$ be permutable transcendental entire functions. We use a recent analysis of the dynamical behaviour in multiply connected wandering domains to make progress on the long standing conjecture that the Julia sets of~$f$ and~$g$ are equal; in particular, we show that $J(f)=J(g)$ provided that neither~$f$ nor~$g$ has a simply connected wandering domain in the fast escaping set.
\end{abstract}

\section{Introduction}
\setcounter{equation}{0}
Let~$f$ be a rational map or a {\tef}. The \emph{Fatou set} $F(f)$ is defined as the set of points in a neighborhood of which the family of iterates of~$f$ forms a normal family, while the \emph{Julia set} is its complement and can be characterized (see \cite{Ba3}) as the closure of the repelling periodic points of~$f$. Connected components of the Fatou set, known as \emph{Fatou components}, are either periodic, preperiodic or wandering. Wandering components of the Fatou set are called \emph{wandering domains}, and do not occur for rational functions \cite{Su}. For both transcendental entire functions and rational functions, periodic and preperiodic components can be completely classified according to the dynamics within them (see \cite{Mi},\cite{BerSur}).

Two holomorphic functions $f$ and~$g$ are said to be \emph{permutable} or \emph{commuting} if they satisfy the equation
\begin{equation}\label{Commuting}
f\circ g=g\circ f.
\end{equation}
It is natural to ask whether two functions satisfying the relation \eqref{Commuting} have the same dynamical behaviour and, in particular, whether they have the same Julia set and, equivalently, the same Fatou set.


It was shown over 90 years ago by both Fatou \cite{Fa1} and Julia \cite{Ju} that if two rational functions commute, then they have the same Julia set. Under some conditions the opposite implication is true; that is, in some cases, $J(f)=J(g)$ implies that $f\circ g=g\circ f$.
Another fascinating result is that, for rational functions, classes of commuting functions can be completely characterized (see  \cite{Ri}, \cite{Er} respectively for a topological-algebraic and a dynamical approach  to the classification of classes of commuting functions).

For transcendental entire functions, the question of characterizing classes of commuting functions seems currently out of reach, though Baker \cite[Theorem~1]{Ba1} showed that if $f$ is a transcendental entire function, then the set of entire functions that commute with $f$ is at most countable. However, the conjecture that commuting implies having the same Julia set, although not solved yet, has seen significant progress.

For rational functions the strategy used by Julia \cite[page~143]{Ju} to prove that $J(f)=J(g)$ whenever $f\circ g=g\circ f$ was to note that if $\alpha$ is a repelling periodic point of~$f$, then the sequence $(g^n(\alpha))$ consists of repelling periodic points of $f$ and must (since~$f$ is rational) include a periodic point of~$g$, which is shown to be repelling. So $\alpha \in J(g)$ and hence $J(f)\subset J(g)$, by the density of repelling periodic points in the Julia set. Thus $J(f)=J(g)$, since the roles of $f$ and $g$ can be interchanged.

Fatou's proof of this fact \cite[page~365]{Fa1} involved showing that $g(F(f))\subset F(f)$ or, equivalently, that $g^{-1}(J(f))\subset J(f)$; this implies that the iterates $(g^n)$ form a normal family in $F(f)$, by Montel's theorem, so $F(f) \subset F(g)$ and hence $J(g)\subset J(f)$.

Fatou's appproach appears more promising than Julia's when~$f$ and~$g$ are {\tef}s. However, a difficulty of proving that $g(F(f))\subset F(f)$ in the transcendental case arises from the possible presence of Fatou components in which the iterates of~$f$ tend to the essential singularity at~$\infty$; such components can be \emph{Baker domains} (that is, periodic Fatou components in which the iterates tend to~$\infty$) or wandering domains in which the iterates tend to~$\infty$. Indeed, the following result was given by Baker.
\begin{theorem}[\cite{Ba2}, Lemma 4.3(i)]\label{Baker}
If $f$ and $g$ are permutable transcendental entire functions and in each component of~$F(f)$ the sequence $(f^n)$ has at least one convergent subsequence with a finite limit function, then $g(F(f))\subset F(f)$.
\end{theorem}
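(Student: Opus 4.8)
The plan is to fix an arbitrary component $U$ of $F(f)$ and prove the component-wise statement $g(U)\subseteq F(f)$; since the components cover $F(f)$, this yields $g(F(f))\subseteq F(f)$. Because $g$ is a non-constant {\tef} it is an open map, so $g(U)$ is an open connected set, and it will be enough to show that $g(U)$ cannot meet $J(f)$.

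The engine of the argument is the commuting relation, which iterates to $f^n\circ g=g\circ f^n$ for every $n$. By hypothesis there is a subsequence with $f^{n_k}\to\phi$ locally uniformly on $U$, where $\phi$ is a finite holomorphic limit function. First I would fix $w_0=g(z_0)\in g(U)$ with $z_0\in U$ and a closed disc $\overline{D(z_0,r)}\subseteq U$; on this compact disc the convergence $f^{n_k}\to\phi$ is uniform, so there is a single constant $M_0$ with $|f^{n_k}|\le M_0$ on $\overline{D(z_0,r)}$ for all $k$. Using $f^{n_k}(g(z))=g(f^{n_k}(z))$ then shows that $f^{n_k}(g(z))\in g(\overline{D(0,M_0)})$ for all such $z$ and all $k$; as $g$ maps this compact set to a compact set, the subsequence $(f^{n_k})$ is uniformly bounded, by some $M$, on the open neighbourhood $g(D(z_0,r))$ of $w_0$. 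Since $w_0$ was arbitrary, $(f^{n_k})$ is locally bounded on the whole of $g(U)$. It is precisely here that the finiteness of the limit function is essential: had $f^{n_k}\to\infty$ on $U$, the values $g(f^{n_k}(z))$ would be uncontrolled and no such bound would follow.

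To finish I would rule out the possibility that $g(U)$ meets $J(f)$ by exploiting the blow-up (expansion) property of Julia sets of {\tef}s: if $V$ is open with $V\cap J(f)\neq\emptyset$ and $a\notin E(f)$, the exceptional set, which contains at most one point, then $f^n(V)\ni a$ for all sufficiently large $n$. Suppose $g(U)$ contained a point of $J(f)$; choose a small disc $D\subseteq g(U)$ about it on which $|f^{n_k}|\le M$, and pick any $a$ with $|a|>M$ and $a\notin E(f)$. The blow-up property gives $f^n(D)\ni a$ for all large $n$, in particular for the indices $n_k$; but $f^{n_k}(D)\subseteq\overline{D(0,M)}$ does not contain $a$, a contradiction. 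Hence $g(U)\subseteq F(f)$.

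I expect the main subtlety to be that the hypothesis supplies only a convergent subsequence, so one cannot argue that the full family $(f^n)$ is normal on $g(U)$ merely from normality of $(f^{n_k})$ there. The device that sidesteps this is the blow-up property, which constrains $f^n(D)$ for \emph{all} large $n$ and therefore applies verbatim to the subsequence $(n_k)$; this is what lets a bound on a single subsequence be converted into membership of $g(U)$ in $F(f)$ without any information about the intermediate iterates.
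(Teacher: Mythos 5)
The paper gives no proof of this statement --- it is quoted directly from Baker \cite{Ba2}, Lemma~4.3(i) --- but your argument is correct and is essentially Baker's original one: the relation $f^{n_k}\circ g=g\circ f^{n_k}$ converts the uniform bound for the convergent subsequence on a compact disc in $U$ into a uniform bound for $(f^{n_k})$ on the open set $g(U)$, and the blowing-up property (the paper's Lemma~\ref{bup}) is exactly the right tool to turn control of a single subsequence into the conclusion $g(U)\subset F(f)$, since it constrains $f^n(D)$ for \emph{all} large $n$ and hence for the indices $n_k$. No gaps.
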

It was shown in \cite{EL} that any {\tef} with a bounded set of singular values cannot have Fatou components in which the iterates tend to infinity. Thus it follows from Theorem~\ref{Baker} that in this class (called the \emph{Eremenko-Lyubich class}) permutable functions always have the same Julia set.

Langley \cite{La} showed that if $f$ and~$g$ are permutable functions of finite order with no wandering domains, then $J(f)=J(g)$. At about the same time, Bergweiler and Hinkkanen \cite{BH} introduced the so-called \emph{fast escaping set} $A(f)$ (see Section \ref{MCWD} for a precise definition) and used it to prove a result that includes the following.

\begin{theorem}[\cite{BH}, Theorem~2]\label{BergHink}
If $f$ and $g$ are permutable transcendental entire functions such that $A(f)\subset J(f)$ and $A(g)\subset J(g)$, then $J(f)=J(g)$. In particular, this holds if $f$ and $g$ have no wandering domains.
\end{theorem}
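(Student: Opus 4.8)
The plan is to follow Fatou's method as outlined above. Since the hypotheses are symmetric in $f$ and $g$, it suffices to prove $J(g)\subseteq J(f)$, equivalently $F(f)\subseteq F(g)$. For this I would first reduce, exactly as in Fatou's argument, to showing that $g(F(f))\subseteq F(f)$: granting this inclusion, each iterate $g^n$ maps $F(f)$ into $F(f)$ and hence omits the infinite set $J(f)$ on every component of $F(f)$, so $(g^n)$ is normal there by Montel's theorem, giving $F(f)\subseteq F(g)$. The whole problem therefore rests on the single inclusion $g(F(f))\subseteq F(f)$.

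To exploit the hypothesis $A(f)\subseteq J(f)$, I would first record that it forces $\ov{A(f)}=J(f)$: one always has $J(f)=\partial A(f)\subseteq\ov{A(f)}$ (a standard property of the fast escaping set, see Section \ref{MCWD}), and the reverse inclusion is immediate from $A(f)\subseteq J(f)$. Consequently $A(f)$ is dense in $J(f)$, so an open set meets $J(f)$ if and only if it meets $A(f)$. Thus, writing $U$ for a component of $F(f)$, the desired inclusion $g(U)\subseteq F(f)$ is equivalent to $g(U)\cap A(f)=\emptyset$, that is, to the statement that no point of $U$ is mapped by $g$ into the fast escaping set. I would prove this by splitting according to the behaviour of $(f^n)$ on $U$.

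If $(f^n)$ has a subsequence $f^{n_k}\to\phi$ converging locally uniformly on $U$ to a finite limit, then for any $z\in U$ the relation $f^{n_k}(g(z))=g(f^{n_k}(z))\to g(\phi(z))$ shows that the orbit of $g(z)$ has a bounded subsequence, so $g(z)\notin A(f)$; this disposes of every component on which $(f^n)$ does not tend to $\infty$, and parallels the conclusion of Theorem \ref{Baker} in this case. The remaining, and genuinely harder, case is that of components $U$ on which $f^n\to\infty$ locally uniformly. Here I would invoke the hypothesis structurally: since every \mconn\ \wand\ is contained in $A(f)$, the assumption $A(f)\subseteq J(f)$ rules such domains out, so every Fatou component, and in particular every escaping one, is \sconn. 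The escaping components are therefore \bd s or \sconn\ escaping \wand s lying outside $A(f)$.

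The crux, and what I expect to be the main obstacle, is to show for such an escaping component $U$ that still $g(U)\cap A(f)=\emptyset$. Suppose $z\in U$ with $w:=g(z)\in A(f)$. Then $|f^n(w)|=|g(f^n(z))|$ grows at the fast rate $M^n(R,f)$ of the iterated maximum modulus, whereas $z\notin A(f)$ limits the growth of $|f^n(z)|$ along the orbit; combined with the bound $|g(f^n(z))|\le M(|f^n(z)|,g)$ and a comparison between the growth of the permutable functions $f$ and $g$, this should contradict the fast escape of $w$. Making this quantitative is the heart of the matter: one must control the escape rate inside \bd s and \sconn\ escaping \wand s relative to $M^n(R,f)$, and combine this with the strong growth relations available for permutable entire functions. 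Once $g(U)\cap A(f)=\emptyset$ is established in all cases, we obtain $g(F(f))\subseteq F(f)$, hence $F(f)\subseteq F(g)$, and the symmetric argument yields $J(f)=J(g)$.
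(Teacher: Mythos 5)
Your reduction to the single inclusion $g(F(f))\subseteq F(f)$, and the observation that under the hypothesis $A(f)\subseteq J(f)$ this amounts to showing $g(U)\cap A(f)=\emptyset$ for every Fatou component $U$ (via $J(f)\subseteq\overline{A(f)}$), both match the paper. But your treatment of the escaping components is not a proof: you explicitly defer ``the heart of the matter'', and the step you defer is the only nontrivial ingredient of the whole theorem. The paper (following \cite{BH}) does not split into cases at all; it invokes the backward invariance
\[
g^{-1}(A(f))\subset A(f)
\]
for permutable transcendental entire functions (\cite[Theorem~5]{BH}, quoted as~\eqref{backfast} in the proof of Proposition~\ref{Not fast escaping}). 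With this in hand the argument is three lines: every Fatou component $U$ of $f$ is disjoint from $A(f)$ because $A(f)\subset J(f)$; if some $z'\in U$ had $g(z')\in A(f)$, then $z'\in A(f)$, a contradiction; and since $g(U)$ is open and $J(f)\subset\overline{A(f)}$, it follows that $g(U)\cap J(f)=\emptyset$. No discussion of limit functions, Baker domains, or the connectivity of wandering domains is needed.

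The quantitative estimate you postpone is genuinely delicate, not a routine verification. To run your comparison $|f^n(g(z))|=|g(f^n(z))|\le M(|f^n(z)|,g)$ against the fast escape of $g(z)$, you need a bound of the form $M(r,g)\le M^k(r,f)$ (iterated maximum modulus) for some fixed $k$ and all large $r$. Such a bound is false for arbitrary pairs of transcendental entire functions ($g$ may grow faster than every iterate of $f$), and establishing it for permutable pairs is precisely the content of the Bergweiler--Hinkkanen lemmas that yield $g^{-1}(A(f))\subset A(f)$. So your plan is not wrong in spirit --- it amounts to reproving a special case of~\eqref{backfast} --- but as written the decisive case is missing, and the case analysis you set up (finite limit functions versus escaping components, multiply connected versus simply connected) becomes unnecessary once the invariance of $A(f)$ under $g^{-1}$ is available.
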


It was shown in \cite{BH} that any Fatou component in $A(f)$ is a wandering domain, so in particular a Baker domain is never in the fast escaping set, and in \cite[Theorem~1.2]{RS2} that if a Fatou component meets $A(f)$, then it is contained in $A(f)$.

The proof of Theorem~\ref{BergHink} shows that if~$f$ and~$g$ are permutable transcendental entire functions and~$U$ is a Fatou component of~$f$ that is not in $A(f)$, then $g(U)\subset F(f)$; see Proposition~\ref{Not fast escaping} for the details of this argument. Therefore, in order to show that $J(f)=J(g)$ whenever $f$ and~$g$ commute, the only thing left to prove is that if~$f$  has a fast escaping Fatou component~$U$, then its image $g(U)$ is contained in the Fatou set of~$f$.

Fast escaping wandering domains can be either simply connected or multiply connected. Multiply connected wandering domains were first constructed by Baker \cite{Ba5} and they are \emph{always} fast escaping (see \cite{RS}), whereas most known simply connected wandering domains are not fast escaping. Indeed, the only known examples of functions with simply connected fast escaping wandering domains were given by Bergweiler \cite{Be2} and Sixsmith \cite{Si}. It is an open problem, mentioned in \cite{RS2}, whether simply connected wandering domains in $A(f)$ can be unbounded.

A detailed analysis of the dynamics in multiply connected wandering domains was recently given in \cite{BRS}, and is described in Section~\ref{MCWD}. This analysis and the arguments from \cite{BH} will be our main tools in proving the following result.

\begin{theorem}\label{main}
Let~$f$ and~$g$ be permutable transcendental entire functions such that~$f$ and~$g$ have no simply connected fast escaping wandering domains. Then $J(f)=J(g)$.
\end{theorem}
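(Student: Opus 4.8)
The plan is to follow Fatou's strategy recalled above. I would prove that $g(F(f))\subset F(f)$; iterating this gives $g^n(F(f))\subset F(f)$ for all $n$, and since $J(f)$ is infinite, Montel's theorem then shows that the family $(g^n)$ is normal on $F(f)$, so $F(f)\subset F(g)$ and hence $J(g)\subset J(f)$. Interchanging the roles of $f$ and $g$ gives $J(f)=J(g)$. Everything therefore reduces to showing that $g(U)\subset F(f)$ for every Fatou component $U$ of $f$.

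So fix such a $U$. If $U\not\subset A(f)$, then $g(U)\subset F(f)$ already, by Proposition~\ref{Not fast escaping}. Hence I may assume $U\subset A(f)$; then $U$ is a \wand, and by the hypothesis of Theorem~\ref{main} it is not \sconn, so $U$ is \mconn. Writing $U_n:=f^n(U)$, the commutation relation yields $f^n(g(U))=g(f^n(U))=g(U_n)$, so, by the complete invariance of $F(f)$, in order to prove $g(U)\subset F(f)$ it is enough to prove that $g(U_n)\subset F(f)$ for some large $n$. This is the convenient formulation, because for large $n$ the domain $U_n$ carries the full structure established in \cite{BRS} and \cite{RS}: it is bounded, it surrounds $0$, it contains a round annulus $A_n=\{z:a_n<|z|<b_n\}$ about $0$ with $a_n\to\infty$ and $b_n/a_n\to\infty$, and it is itself contained in a round annulus about $0$ whose inner radius tends to infinity; moreover \cite{BRS} describes precisely how $f$ maps $U_n$ onto $U_{n+1}$.

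The core of the proof is to transfer this annular structure through $g$ by means of the identity $f^n(g(U))=g(U_n)$. I would use the elementary geometry of the \tef{} $g$ on large circles: the image $g(\{|z|=t\})$ winds about $0$ as many times as $g$ has zeros in $\{|z|<t\}$, so for large $t$ it surrounds $0$, while its outer scale is governed by the maximum modulus $M(t,g)\to\infty$. Letting $t$ range over the (very long) interval $(a_n,b_n)$ corresponding to $A_n$, one sees that $g(A_n)$, and hence $g(U_n)=f^n(g(U))$, contains round annuli about $0$ whose moduli tend to infinity. Combined with the description of $f$ on the $U_n$ from \cite{BRS}, the aim is to show that the forward orbit of $g(U)$ under $f$ reproduces exactly the bounded, $0$-surrounding, fast escaping annular pattern that characterises points of a \mconn{} \wand; one would then conclude that $g(U)$ lies in such a domain, and in particular that $g(U)\subset F(f)$, completing the proof as above.

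The main obstacle is precisely this last transfer, and it is caused by the essential singularity of $g$ at $\infty$. The naive hope that ``$U_n$ lies far out, so $g(U_n)$ lies far out and is therefore in $F(f)$'' is false: on the distant annuli where the $U_n$ live, $g$ oscillates wildly, so that $g(U_n)$ can approach $0$ (the minimum modulus of $g$ on a large circle need not be large, as the example of $\sin$ shows) and can cover almost the whole plane, and no omitted-value argument via Montel's theorem is available. The crux is therefore to use the fine location, shape and dynamical estimates of \cite{BRS} quantitatively enough---matching the scales produced by $g$ with the inner and outer radii $a_n,b_n$ of the annuli $A_n$ and with the rate of fast escape---to force the connected set $g(U)$ to miss $J(f)$ entirely. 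I expect this matching of scales, together with the control of the minimum modulus of $g$ along the orbit, to be where essentially all of the work lies; depending on whether the orbit of $g(U)$ escapes, one may additionally need to fall back on the finite-limit criterion behind Theorem~\ref{Baker} to treat the non-escaping case.
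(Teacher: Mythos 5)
Your reduction is the same as the paper's: split the Fatou components of~$f$ into those not in $A(f)$ (handled by Proposition~\ref{Not fast escaping}) and the multiply connected wandering domains (the only fast escaping components left under the hypothesis), and conclude via Montel. The problem is that for the second, decisive case you do not actually give an argument. You correctly identify the obstacle --- that $g$ oscillates wildly on the distant annuli where the $U_n$ live, so no crude localisation or omitted-value argument works --- but your proposed remedy, ``matching the scales produced by $g$ with the inner and outer radii of the $A_n$ and with the rate of fast escape,'' is a description of what a proof would have to achieve, not a proof. Moreover you aim at the \emph{positive} statement that the orbit of $g(U)$ reproduces the annular pattern of a multiply connected wandering domain, which is strictly harder than what is needed and is not how the difficulty is resolved.

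The paper instead argues by contradiction and the missing idea is a hyperbolic length comparison. Assuming $g(z)\in J(f)$ for some $z\in U$, the blowing up property (Lemma~\ref{bup}) --- which your sketch never invokes --- returns an iterate: there are $z_0\in U$, $N\ge 2$ and a neighbourhood $V\subset U$ of $z_0$ with $\ov{f^N(g(V))}\subset U$. Commutativity then places the curve $\gamma_n=g(C_{n+N})$, the image of the circle of radius $r_{n+N}$, inside $U_n$. Now two estimates collide: since $g$ is a holomorphic map of the fat annulus $A_{n+N}\subset f^{n+N}(V)$ into $B_n\subset U_n$, the Schwarz--Pick contraction makes $\gamma_n$ hyperbolically short, of length $O(1/\log r_{n+N})$; but $\gamma_n$ winds around $0$ (Picard plus the argument principle) inside the round annulus $B_n=A(r_n^{a_n},r_n^{b_n})$, so Lemma~\ref{ann-hyp} makes it hyperbolically long, of length at least $2\pi^2/(b_n\log r_n)$. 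Comparing gives $\log r_{n+N}<\tfrac{1}{2\alpha}\log r_{n+1}$, which contradicts the fast escaping growth $\log r_{n+N}\ge\tfrac12\log M(r_{n+1},f)$ from Corollary~\ref{BRS-Cor} together with \eqref{logMr}. None of this quantitative mechanism (blowing up to re-enter $U$, the two-sided hyperbolic length estimate, and the growth rate along the orbit) appears in your proposal, so the central step of the theorem remains unproved. Your closing suggestion to ``fall back on the finite-limit criterion behind Theorem~\ref{Baker}'' also cannot help here, since in a fast escaping wandering domain no subsequence of $(f^n)$ has a finite limit.
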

In Section~3 we state and prove a more precise result than Theorem~\ref{main}, and give an intriguing corollary of it.

%

\subsection*{Acknowledgments} The first author is grateful to Filippo Bracci for first introducing her to the problem of commuting functions.

\section{Background on multiply connected wandering domains}\label{MCWD}
\setcounter{equation}{0}
From now on we let~$f$ denote a {\tef} and let $U$ denote a multiply connected wandering domain of $f$, with forward iterates $U_n=f^n(U)$, $n\in\N$. Baker showed that multiply connected wandering domains have the following properties.
\begin{theorem}[\cite{Ba4} and  \cite{Ba2}, Theorem~3.2]\label{Baker1}
If $U$ is a multiply connected wandering domain, then
\begin{enumerate}
\item each $U_n$ is bounded and multiply connected;
\item for all sufficiently large $n$, $U_n$ lies in a bounded component of $\C\setminus U_{n+1}$;
\item $\dist(U_n,0)\to\infty$ as $n\to\infty$.
 \end{enumerate}
\end{theorem}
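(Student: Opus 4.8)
The three assertions are facets of a single phenomenon: on a \mconn\ \wand\ the iterates escape to infinity and the successive images nest outward. The tools I would use are the normality of $(f^n)$ on $U$ (since $U\subset F(f)$), the maximum modulus principle (available because each $f^n$ is entire, hence pole-free), and the transcendental growth of $f$, encoded in the minimum modulus estimates for a \tef.

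The plan is first to prove that $f^n\to\infty$ locally uniformly on $U$. Fix a Jordan curve $\gamma\subset U$ that is not null-homotopic in $U$; it bounds a Jordan region $D$ enclosing a bounded complementary component $K$ of $U$, and since $\partial K\subset\partial U\subset J(f)$ we have $D\cap J(f)\neq\emptyset$. Now consider $\sup_n\max_\gamma|f^n|$. If this were finite, the maximum principle would give $|f^n|\le C$ throughout $\overline D$ for all $n$, so $(f^n)$ would be uniformly bounded, hence normal by Montel's theorem, on the open set $D$ --- contradicting the presence of a point of $J(f)$ in $D$. Hence $\max_\gamma|f^n|$ is unbounded, and since on the compact set $\gamma$ every locally uniform limit of $(f^n)$ is either finite or $\equiv\infty$, there is a subsequence with $f^{n_k}\to\infty$ uniformly on $\gamma$.

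The second step is to upgrade this subsequential escape to genuine escape of the full sequence and, simultaneously, to establish the nesting in (2). This is where the transcendental nature of $f$ is essential: writing $\gamma_n=f^n(\gamma)$, I would use the minimum modulus theorem for entire functions, together with the convexity of $r\mapsto\log M(r,f)$, to show that once $\gamma_n$ lies far out, $f$ maps a round annulus containing $\gamma_n$ onto a region whose bounding curve $\gamma_{n+1}=f(\gamma_n)$ winds around a disc $B(0,R_n)$ strictly containing the region bounded by $\gamma_n$, with $R_n\to\infty$. This yields several consequences at once. First, $\min_\gamma|f^{n+1}|\ge R_n>\max_\gamma|f^n|$ for all large $n$, so the escape persists for the full sequence and $f^n\to\infty$ locally uniformly on $U$. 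Secondly, $U_{n+1}\supset\gamma_{n+1}$ surrounds $\gamma_n$, and since the $U_n$ are eventually pairwise disjoint ($U$ being a \wand), $U_n$ must lie in a bounded complementary component of $U_{n+1}$, which is (2); being trapped in a bounded hole, $U_n$ is bounded, and it is \mconn\ since it contains the non-contractible curve $\gamma_n$, giving (1). Finally (3) follows from the same picture: the nested holes increase to cover any fixed compact set, so for large $n$ the origin lies in a bounded complementary component of $U_n$ while the inner boundary of $U_n$ is pushed outward by the minimum-modulus growth, whence $\dist(U_n,0)\to\infty$.

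I expect the main obstacle to be the quantitative step of the third paragraph: proving, via the minimum modulus theorem and convexity, that $f$ carries the annular neighbourhood of $\gamma_n$ around a disc engulfing $\gamma_n$ itself, uniformly for all sufficiently large $n$. The soft part --- ruling out finite limits with Montel's theorem and the maximum principle --- is routine, but turning ``some subsequence escapes on $\gamma$'' into the outward-nesting of the whole orbit is the genuinely transcendental heart of the argument, and is also where one must take care that the winding statements and the inheritance of multiple connectivity hold for every $U_n$.
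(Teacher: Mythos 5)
The paper does not prove this statement; it is quoted from Baker's papers \cite{Ba4} and \cite{Ba2}, so your attempt can only be measured against the standard argument in those references. Your first step is correct and is essentially Baker's opening move: a non-contractible Jordan curve $\gamma\subset U$ encloses a bounded complementary component of $U$, hence points of $J(f)$, and the maximum principle plus Montel shows $\max_\gamma|f^n|$ is unbounded, so some subsequence tends to $\infty$ on $\gamma$.

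The genuine gap is in your second step, and it is exactly where you predicted: the tools you name there cannot deliver what you ask of them. (a) For a general transcendental entire function there is \emph{no} useful minimum modulus lower bound on an annulus containing $\gamma_n$: $f$ may vanish on every large circle, and the annulus $A(\min_{\gamma_n}|z|,\max_{\gamma_n}|z|)$ is not contained in the Fatou set, so nothing function-theoretic forces $|f|$ to be large on $\gamma_n$; that $\min_{\gamma_{n+1}}|z|\to\infty$ is a dynamical fact, not a consequence of the minimum modulus theorem or the convexity of $\log M(r,f)$. (b) The winding of $\gamma_{n+1}$ about $0$ cannot come from the behaviour of $f$ on an annular neighbourhood of $\gamma_n$; it comes from the argument principle applied to $f^{n+1}$ on $\operatorname{int}\gamma$, i.e.\ from the blowing-up property (the paper's Lemma~\ref{bup}) applied to the Julia points you already found inside $\gamma$: since $f^{n}(\operatorname{int}\gamma)$ eventually covers any compact set avoiding the exceptional point, $f^{n}(\operatorname{int}\gamma)\ni 0$ (or a nearby non-exceptional value), whence $\operatorname{ind}(\gamma_n,0)>0$. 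This same covering statement also gives $\max_\gamma|f^n|\to\infty$ for the \emph{full} sequence, killing all finite limit functions at once, so no separate ``upgrade'' of the subsequential escape is needed. (c) Even with the winding and $\min_{\gamma_n}|z|\to\infty$ in hand, your key inequality $\min_\gamma|f^{n+1}|\ge R_n>\max_\gamma|f^n|$ --- which is what makes $\gamma_{n+1}$ surround $\gamma_n$ and yields (2) --- is asserted but not derived; Baker obtains the needed comparison of $\max_{\gamma_n}|z|$ with $\min_{\gamma_{n+1}}|z|$ from a hyperbolic-length estimate (the hyperbolic length of $\gamma_n$ in $U_n$ is non-increasing by Schwarz--Pick, and a lower bound for the hyperbolic metric of a plane domain omitting $0$ forces $\log\max_{\gamma_n}|z|=O(\log\min_{\gamma_n}|z|)$), an ingredient absent from your sketch. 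Finally, a minor point: your argument only reaches ``for all sufficiently large $n$'', whereas (1) asserts boundedness and multiple connectivity of \emph{every} $U_n$; the small-$n$ cases need the extra observations that $\gamma_n$ lies inside $\gamma_{n+k}$ for large $k$ (so an unbounded $U_n$ would meet $U_{n+k}$, contradicting wandering) and that a loop contractible in $U_n$ has contractible image in $U_{n+k}$.
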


Let $M(r,f)$ denote the \emph{maximum modulus} of $f$; that is,
\[M(r,f):=\max_{|z=r|} |f(z)|.\]
Since $f$ is a transcendental entire function, we have
\begin{equation}\label{logMr}
\lim_{r\to\infty}\frac{\log M(r,f)}{\log r}=\infty.
\end{equation}
It was shown in \cite{RS} that any multiply connected wandering domain is contained in the \emph{fast escaping set} $A(f)$, defined as follows (see \cite{BH}):
\[A(f):=\{z\in\C:\text{ there exists $L\in\N$ with $|f^n(z)|>M(R,f^{n-L})$ for $n>L$}\}.\]
It was pointed out in \cite{BH} that the fast escaping set of a {\tef} is always non-empty, and that this follows from the construction of an escaping point by Eremenko~\cite{ErFast}. For more on the fast escaping set, including a detailed proof that it is non-empty, see \cite{RS2}.

We use several results from \cite{BRS} concerning multiply connected wandering domains. Here we use the following notation for an annulus:
\[A(r,R) = \{z\in\C : r < |z| < R\},\quad 0<r<R.\]

\begin{theorem}[\cite{BRS}, Theorem 1.2]\label{BRS1}
Let f be a transcendental entire function with a multiply connected wandering domain~$U$. For each $z_0\in U$ and each open set $V\subset U$ containing $z_0$, there exists
$\alpha > 0$ such that, for sufficiently large $n \in \N$,
\[U_n \supset f^n(V) \supset A({r_n}^{1-\alpha}, {r_n}^{1+\alpha})=:A_n,\]
where $r_n:=|f^n(z_0)|$.
\end{theorem}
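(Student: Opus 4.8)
The plan is to prove the non-trivial inclusion $f^n(V)\supset A_n$; the other inclusion $f^n(V)\subset U_n$ is immediate, since $V\subset U$ gives $f^n(V)\subset f^n(U)=U_n$. I would proceed in two stages: first show that $U_n$ itself contains a round annulus of the required shape for all large $n$, and then transport such an annulus into the smaller set $f^n(V)$. The engine of the first stage is a covering lemma for round annuli surrounding the origin; the engine of the second is the observation that, once $f^N(V)$ contains one round annulus about $0$, iterating the covering lemma carries it forward to all later times. The two places I expect real difficulty are a lower bound for the minimum modulus of $f$ on the relevant circles, and the blow-up argument needed to place the first round annulus inside some $f^N(V)$.

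For the covering lemma, fix $n$ so large that, by Theorem~\ref{Baker1}, $U_n$ is bounded and multiply connected and surrounds the origin, and $\dist(U_{n+1},0)$ is large. Then $f$ does not vanish on $U_n$, because $f(U_n)=U_{n+1}$ lies far from $0$; and since $f(U_n)=U_{n+1}$ surrounds $0$, the image of a loop generating the fundamental group of $U_n$ must wind around $0$, so by the argument principle $f$ has a zero in the bounded complementary component of $U_n$, whence every circle $\{|z|=\rho\}\subset U_n$ has image winding at least once around $0$. If $A(s,t)\subset U_n$, a winding-number continuity argument then shows that each $w$ with
\[ \max_{|z|=s}|f(z)|<|w|<\min_{|z|=t}|f(z)| \]
has a preimage under $f$ in $A(s,t)$, whence
\[ f(A(s,t))\supset A\Bigl(\max_{|z|=s}|f(z)|,\ \min_{|z|=t}|f(z)|\Bigr). \]

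Taking $s=r_n^{1-\alpha}$ and $t=r_n^{1+\alpha}$, convexity of $t\mapsto\log M(e^t,f)$ together with \eqref{logMr} controls the inner radius, since $\max_{|z|=s}|f(z)|\le M(r_n^{1-\alpha},f)$. The outer radius requires $\min_{|z|=t}|f(z)|$ to be large, and this minimum-modulus bound along the circles $\{|z|=r_n^{1+\alpha}\}$ is the first hard point: although $f$ is non-vanishing there (these circles lie in $U_n$), a general transcendental entire function may have minimum modulus far smaller than its maximum modulus. I would resolve this using a minimum-modulus estimate valid along the sequence of radii determined by the escaping orbit, the fast escaping property $U\subset A(f)$ guaranteeing that these radii grow fast enough for such an estimate to apply. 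Calibrating $\alpha>0$ in this way gives, for all large $n$, both $\max_{|z|=r_n^{1-\alpha}}|f(z)|\le r_{n+1}^{1-\alpha}$ and $\min_{|z|=r_n^{1+\alpha}}|f(z)|\ge r_{n+1}^{1+\alpha}$, and hence $A(r_n^{1-\alpha},r_n^{1+\alpha})\subset U_n$ together with $f(A_n)\supset A_{n+1}$; note that the centring is automatic because $f^{n+1}(z_0)\in f(A_n)$ has modulus $r_{n+1}$.

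It remains to place one such annulus inside some $f^N(V)$; once this is done, $f^n(V)=f^{n-N}(f^N(V))\supset f^{n-N}(A_N)\supset A_n$ for all $n\ge N$, by iterating the covering lemma. This base step is the second, and I think the principal, obstacle, because $V$ may be a small disc that does not wind around the origin at all. The plan here is a blow-up argument: since $U\subset A(f)$ the images $f^N(V)$ escape and expand, while by Theorem~\ref{Baker1} the domains $U_N$ surround $0$ with conformal modulus tending to infinity, so that their core geodesics become short. One shows that $f^N(z_0)$ stays within bounded hyperbolic distance of this core geodesic and that $f^N(V)$ contains a definite hyperbolic neighbourhood of $f^N(z_0)$; for large $N$ such a neighbourhood must then wrap once around $0$ and, using the radial expansion, fatten into a full round annulus about the origin, which after relabelling is the required $A_N$. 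The delicate points here are showing that $f^N(V)$ does not collapse hyperbolically and that it is correctly positioned around the core of $U_N$; both ultimately rest on the fast escaping property and on the structure provided by Theorem~\ref{Baker1}.
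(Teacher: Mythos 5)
This statement is not proved in the paper at all: it is imported verbatim from \cite{BRS} (Theorem~1.2 there), where its proof occupies a substantial part of a long paper, so there is no in-paper argument to compare yours against. Judged on its own terms, your outline has the right general shape (a covering lemma for round annuli, forward iteration of that lemma, and a separate base step to seed one annulus inside some $f^N(V)$), but the two points you yourself flag as ``hard'' are not incidental difficulties --- they are the theorem --- and your proposed resolutions do not close them.

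Concretely: (1) Your claim that ``since $f(U_n)=U_{n+1}$ surrounds $0$, the image of a loop generating the fundamental group of $U_n$ must wind around $0$'' is a non sequitur as written; the fact that the image \emph{set} surrounds $0$ says nothing about the winding of the image \emph{curve}. The underlying fact --- that $f$ has a zero in the bounded complementary component of $U_n$ for large $n$ --- is true but is itself a lemma requiring proof (essentially via the minimum principle and the observation that otherwise an open set meeting $J(f)$ would escape uniformly; see \cite{RS}). (2) The outer-radius bound $\min_{|z|=r_n^{1+\alpha}}|f(z)|\ge r_{n+1}^{1+\alpha}$ cannot be obtained from ``a minimum-modulus estimate valid along the sequence of radii'': no such estimate exists for general transcendental entire functions, and the only bound that comes for free from $f(\{|z|=r_n^{1+\alpha}\})\subset U_{n+1}$ is $\min\ge\dist(0,U_{n+1})$, which is at most about $r_{n+1}^{1-\alpha}$ (since $U_{n+1}$ itself reaches down to modulus $r_{n+1}^{1-\alpha}$) and hence falls far short of $r_{n+1}^{1+\alpha}$. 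Closing this gap is precisely where \cite{BRS} deploys its hyperbolic-metric machinery (contraction of hyperbolic length of the image circles in $U_{n+1}$ combined with lower bounds for the hyperbolic density in plane domains omitting two points), none of which appears in your sketch. (3) The base step is only gestured at: shortness of the core geodesic of $U_N$ and non-collapse of $f^N(V)$ do not by themselves force a hyperbolic neighbourhood of $f^N(z_0)$ to wrap around $0$, and ``fatten into a full round annulus'' is exactly the conclusion being sought, not a consequence of radial expansion. In short, the proposal is a reasonable map of where the difficulties lie, but it does not constitute a proof, and the correct response in the context of this paper is to cite \cite{BRS} rather than to reprove the result.
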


By Theorem \ref{BRS1}, for any $z_0\in U$, there exists $\alpha>0$ such that, for sufficiently large~$n$, the maximal round annulus centered at~$0$, contained in $U_n$, and containing $f^n(z_0)$, is of the form  $B_n=A(r_n^{a_n},r_n^{b_n})$ with $0< a_n\leq 1-\alpha\leq
1+\alpha\leq b_n<\infty$, where $r_n=|f^n(z_0)|$. The union of these annuli $B_n$ forms an absorbing set for the dynamics of~$f$ on~$U$, in the following sense.

\begin{theorem}[\cite{BRS}, Theorem 1.3]\label{BRS2}
Let $f$ be a transcendental entire function with a multiply connected
wandering domain $U$ and let $z_0 \in U$. Then, for each compact subset $K$ of $U$, there exists $N\in \N$ such that
\begin{equation}
 f^n(K) \subset B_n:=A(r_n^{a_n},r_n^{b_n})\subset U_n, \text{ for } n \ge N,
\end{equation}
where $r_n=|f^n(z_0)|$ and $0< a_n\leq 1-\alpha\leq 1+\alpha\leq b_n<\infty$, with $\alpha$ as in Theorem~\ref{BRS1}.
\end{theorem}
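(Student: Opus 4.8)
The plan is to use the decrease of the hyperbolic metric under iteration to bound how far the orbit of $K$ can spread, and then to convert this hyperbolic control into the radial (modulus) control expressed by the annuli $B_n$. First I would reduce to a convenient $K$: replacing $K$ by a compact, connected set $\hat K$ containing both $K$ and $z_0$ with $\hat K\subset U$ (join the pieces of $K\cup\{z_0\}$ by arcs in $U$), the hyperbolic diameter $D:=\mathrm{diam}_{\rho_U}\hat K$ is finite. Since $f^n\colon U\to U_n$ is holomorphic and onto, the Schwarz--Pick lemma gives $\rho_{U_n}(f^n(w),f^n(z_0))\le\rho_U(w,z_0)\le D$ for every $w\in\hat K$ and every $n$. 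Thus the entire orbit of $K$ stays within a fixed hyperbolic distance $D$ of the reference orbit $(f^n(z_0))$, uniformly in $n$.

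The heart of the matter is then to show that, for all large $n$, the hyperbolic ball $\{z\in U_n:\rho_{U_n}(z,f^n(z_0))\le D\}$ is contained in $B_n$. Here I would exploit that, by Theorem~\ref{BRS1}, $B_n\supset A_n=A(r_n^{1-\alpha},r_n^{1+\alpha})$, so $a_n\le 1-\alpha$, $b_n\ge 1+\alpha$, and the conformal modulus of $B_n$ is at least $2\alpha\log r_n\to\infty$. Passing to the normalized radial coordinate $u=(\log|z|-a_n\log r_n)/((b_n-a_n)\log r_n)\in(0,1)$, the hyperbolic metric of the round annulus $B_n$ takes the $n$-independent form $\tfrac{\pi}{\sin\pi u}\,|du|$; hence for each fixed radius $D'$ and each $n$ the $\rho_{B_n}$-ball of radius $D'$ about $f^n(z_0)$ occupies a $u$-interval compactly contained in $(0,1)$ (both boundary circles of $B_n$ being at infinite hyperbolic distance), and therefore lies in $B_n$.

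The genuine obstacle is the direction of the comparison. Schwarz--Pick delivers smallness of $\rho_{U_n}$, but the inclusion $B_n\subset U_n$ only yields $\rho_{U_n}\le\rho_{B_n}$, which is the wrong inequality: a small $\rho_{U_n}$-distance need not force a small $\rho_{B_n}$-distance, and \emph{a priori} $f^n(w)$ could sit in a non-round part of $U_n$ reaching radii outside $[r_n^{a_n},r_n^{b_n}]$. What is needed is a reverse estimate $\rho_{U_n}\gtrsim\rho_{B_n}$ near $f^n(z_0)$. I would obtain this by sandwiching: $U_n$ lies in the doubly connected domain $W_n=\C\setminus(\overline{G_n}\cup\overline{H_n})$, where $G_n$ and $H_n$ are the bounded and unbounded complementary components of $U_n$ that obstruct the maximality of $B_n$, so that $\rho_{U_n}\ge\rho_{W_n}$.

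The crux is then to show that $W_n$ has conformal modulus comparable to that of $B_n$ — equivalently, that these complementary components reach essentially up to the circles $|z|=r_n^{a_n}$ and $|z|=r_n^{b_n}$ and are substantial rather than thin filaments — so that $\rho_{W_n}$ and $\rho_{B_n}$ agree up to a bounded factor on the central region. This is exactly where the soft facts of Theorem~\ref{Baker1} (nesting of the $U_n$ and $\dist(U_n,0)\to\infty$) do not suffice, and one must invoke the finer structure of multiply connected wandering domains (the near-roundness of $U_n$, and the power-map behaviour of $f$ between consecutive annuli $A_n$ and $A_{n+1}$) from the cited analysis. This structural input, rather than the hyperbolic bookkeeping, is where the real difficulty lies.
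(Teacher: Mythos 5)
This theorem is imported verbatim from \cite{BRS} (Theorem~1.3), so the present paper contains no proof to compare against; judged on its own terms, your proposal is incomplete. The Schwarz--Pick reduction in your first paragraph is correct and is the right starting point: the orbit of $K$ stays within a fixed hyperbolic distance $D$ of $f^n(z_0)$ measured in $U_n$. You also correctly diagnose that $B_n\subset U_n$ gives the hyperbolic comparison in the useless direction. But your proposed remedy --- sandwiching $U_n$ inside a doubly connected domain $W_n$ and proving that its modulus is comparable to that of $B_n$, which you say requires the complementary components to be ``substantial'' and invokes the near-roundness of $U_n$ and the power-map behaviour of $f$ --- is left entirely unproved, and it is also not the right tool. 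As written, the argument stops exactly where a proof is needed.

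The missing idea is elementary and needs none of the fine structure you appeal to. By the maximality of $B_n$, the set $\C\setminus U_n$ contains a point $p_n$ with $|p_n|=r_n^{a_n}$ and a point $q_n$ with $|q_n|=r_n^{b_n}$ (otherwise a slightly larger round annulus would still lie in $U_n$), and $0\notin U_n$ for all large $n$ by Theorem~\ref{Baker1}. Now use the classical lower bound for the hyperbolic density of a plane domain $W$ omitting two finite points $a,b$:
\[
\rho_W(z)\ \ge\ \frac{1}{2\,|z-a|\,\bigl(\bigl|\log |(z-a)/(b-a)|\bigr|+K\bigr)},
\]
where $K$ is an absolute constant. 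Taking $a=0$ and $b=q_n$ (respectively $b=p_n$) and integrating radially over the moduli traversed shows that the $\rho_{U_n}$-distance from $f^n(z_0)$, of modulus $r_n$, to any point of $U_n$ of modulus at least $r_n^{b_n}$ (respectively at most $r_n^{a_n}$) is bounded below by
\[
\frac{1}{2}\log\Bigl(1+\frac{\alpha\log r_n}{K}\Bigr),
\]
using $b_n-1\ge\alpha$ and $1-a_n\ge\alpha$. This tends to infinity with $n$, so it eventually exceeds the fixed bound $D$, forcing $f^n(K)\subset B_n$ for all large $n$. Note that a single point of $\C\setminus U_n$ on each of the two critical circles suffices for this estimate; whether the obstructing complementary components are thin filaments is irrelevant, so the structural input you identify as the crux is not in fact required.
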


We also use a special case of the following result.
\begin{lemma}[\cite{BRS}, Lemma 4.3 part (a)]\label{BRS3}
Let~$f$ be a transcendental entire function with a multiply connected wandering domain $U$. Let $z_0 \in U$ and, for $n \in \N$, let $\delta_n=1/\sqrt{\log r_n}$ with $r_n=|f^n(z_0)|$.
Then there exists $N \in \N$ such that, if we have
\begin{equation}\label{Un}
U_n\supset A(r_n^{1-2\pi \delta_n},r_n^{1+2\pi \delta_n}),
\end{equation}
for some $n \ge N$, then
\begin{equation}\label{rnm}
r_{n+m } \geq  M(r_n, f^m)^{1 -\delta_n}, \text{ for }m \in \N.
\end{equation}
\end{lemma}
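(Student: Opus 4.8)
The plan is to deduce the estimate for all $m$ at once from a single minimum-modulus inequality for the iterate $f^m$ on the annulus $\wt A_n:=A(r_n^{1-2\pi\delta_n},r_n^{1+2\pi\delta_n})$ supplied by the hypothesis \eqref{Un}. The key point is that $f^n(z_0)$ lies on the circle $|z|=r_n$, so
\[
r_{n+m}=|f^{n+m}(z_0)|=|f^m(f^n(z_0))|\ge \min_{|z|=r_n}|f^m(z)|=:m(r_n,f^m).
\]
Hence it suffices to show that $m(r_n,f^m)\ge M(r_n,f^m)^{1-\delta_n}$ for every $m\in\N$, with the \emph{same} $\delta_n$; this is exactly why the exponent does not deteriorate as $m$ grows, since the whole dynamical content has been replaced by a static estimate for the single function $f^m$ on the fixed annulus $\wt A_n$.

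First I would fix $N$ so large that $0\notin U_k$ for all $k\ge N$, which is possible since $\dist(U_k,0)\to\infty$ by Theorem~\ref{Baker1}(3). For $n\ge N$ the function $f^m$ then has no zeros in $\wt A_n\subset U_n$: a zero $\zeta\in \wt A_n$ would give $0=f^m(\zeta)\in f^m(U_n)\subset U_{n+m}$, contradicting $0\notin U_{n+m}$. Moreover, as $U_{n+m}$ is bounded and $\dist(U_{n+m},0)\to\infty$, on $\wt A_n$ we have the two-sided bound $d_{n+m}\le|f^m(z)|\le R_{n+m}$, where $d_{n+m}:=\dist(U_{n+m},0)$ and $R_{n+m}:=\sup_{w\in U_{n+m}}|w|$; by the structure of multiply connected wandering domains from \cite{BRS} the ratio $\log R_{n+m}/\log d_{n+m}$ stays bounded, say by a constant $K$.

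The heart of the matter is the minimum-modulus estimate for the zero-free function $h:=f^m$ on $\wt A_n$. I would pass to logarithmic coordinates $w=\log z=t+i\theta$, where $u:=\log|h|$ is harmonic on the flat cylinder of circumference $2\pi$ and half-height $H_n:=2\pi\delta_n\log r_n=2\pi/\delta_n$ about the central circle $t=L:=\log r_n$. Writing $h=z^p e^{g}$ with $p$ the number of zeros of $h$ inside the inner boundary of $\wt A_n$, the function $v:=u-p\log|z|=\operatorname{Re}g$ is harmonic with $t$-independent circle average (by Jensen's formula, using that $\wt A_n$ is zero-free). Expanding $v$ in a Fourier series in $\theta$, the $k$-th mode is damped at the central circle by the factor $1/\cosh(kH_n)$ relative to its values on $\partial\wt A_n$, so summing and using $\sum_{k\ne0}1/\cosh(kH_n)\lesssim e^{-H_n}$ gives
\[
\operatorname{osc}_{|z|=r_n}\log|h|\;\lesssim\;e^{-H_n}\operatorname{osc}_{\wt A_n}\log|h|\;\le\;e^{-H_n}\log\frac{R_{n+m}}{d_{n+m}}\;\le\;Ke^{-H_n}\log d_{n+m}.
\]
Since $M(r_n,f^m)\ge d_{n+m}$ and $Ke^{-H_n}=Ke^{-2\pi/\delta_n}\le\delta_n$ for large $n$, this yields $\operatorname{osc}_{|z|=r_n}\log|h|\le\delta_n\log M(r_n,f^m)$, hence $\log m(r_n,f^m)\ge(1-\delta_n)\log M(r_n,f^m)$, as required.

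The step I expect to be the main obstacle is making this damping estimate rigorous while controlling the oscillation of $\log|h|$ \emph{on the boundary of} $\wt A_n$: comparing naively against $\log M(r_n^{1+2\pi\delta_n},f^m)$ is useless for fast-growing $f$, and the argument only closes because zero-freeness forces $h$ to be essentially a monomial on $\wt A_n$, so that the oscillation is governed by the bounded radius-ratio of the image domain $U_{n+m}$ rather than by the growth of the maximum modulus. Verifying the uniformity in $m$ of the structural bounds on $U_{n+m}$, and checking that the enormous factor $e^{2\pi/\delta_n}$ absorbs them, is where the care lies; the choice $\delta_n=1/\sqrt{\log r_n}$ is precisely what makes $\delta_n\to0$ while $e^{2\pi/\delta_n}\to\infty$.
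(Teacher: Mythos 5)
First, note that the paper does not prove this lemma itself; it is quoted verbatim from \cite{BRS} (Lemma~4.3(a)), so the comparison below is with the argument given there.

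Your overall strategy is the right one and is essentially the strategy of \cite{BRS}: since $f^n(z_0)$ lies on $|z|=r_n$, it suffices to prove the \emph{static} estimate $\min_{|z|=r_n}|f^m(z)|\ge M(r_n,f^m)^{1-\delta_n}$ for the zero-free function $f^m$ on the annulus $\wt A_n$ supplied by \eqref{Un}, and this is a statement about the harmonic function $\log|f^m|$ on the central circle of a conformally very fat annulus. The zero-freeness and the choice of $N$ via $\dist(U_k,0)\to\infty$ are also exactly right.

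The genuine gap is the inequality
\[
\operatorname{osc}_{|z|=r_n}\log|f^m|\;\le\;e^{-H_n}\log\frac{R_{n+m}}{d_{n+m}}\;\le\;Ke^{-H_n}\log d_{n+m},
\]
specifically the claim that $\log R_{n+m}/\log d_{n+m}$ is bounded by a constant $K$ uniformly in $m$. This is not available from the quoted structure theory and is false in general. From Theorem~\ref{Baker1}(2) and Theorem~\ref{BRS1} one gets roughly $d_{n+m}\ge r_{n+m-1}^{1-\alpha}$ and only $R_{n+m}\le r_{n+m+1}^{1-\alpha}$; Theorem~\ref{BRS2} gives $b_n\ge 1+\alpha$ but no uniform upper bound on $b_n$, and indeed \cite{BRS} exhibits wandering domains for which the outer exponent tends to infinity. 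Worse, the ratio you need to absorb behaves like $\log r_{n+m+1}/\log r_{n+m}$, which tends to infinity with $m$ precisely because $U$ is fast escaping ($\log r_{n+m+1}\gtrsim \tfrac12\log M(r_{n+m},f)$, which is superlinear in $\log r_{n+m}$ by \eqref{logMr}). Since $e^{H_n}$ is fixed once $n$ is fixed while $m$ ranges over all of $\N$, the inequality $e^{-H_n}\log(R_{n+m}/d_{n+m})\le\delta_n\log M(r_n,f^m)$ fails for large $m$, so the argument does not close. The repair — and this is what \cite{BRS} does — is to use only the \emph{positivity} of $u=\log|f^m|$ on $\wt A_n$ (which holds uniformly in $m$ once $\dist(U_k,0)>1$ for $k\ge N$) and apply Harnack's inequality for positive harmonic functions: two points on the central circle of $\wt A_n$ are at hyperbolic distance at most a constant times $\delta_n$ (this is where the factor $2\pi$ in \eqref{Un} and the choice $\delta_n=1/\sqrt{\log r_n}$ enter, exactly as in Lemma~\ref{ann-hyp}), whence $u(z_1)\ge e^{-\delta_n}u(z_2)\ge(1-\delta_n)u(z_2)$. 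In your Fourier picture this amounts to bounding the $k$th boundary coefficients by the circle \emph{means} of $u$ (which are controlled by $\log M(r_n,f^m)$ and by the zero-counting term $pH_n$, itself at most $\log M(r_n,f^m)$ by Jensen) rather than by the oscillation of $u$ over $\wt A_n$, which requires an upper bound on $|f^m|$ that you do not have.
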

Since $\delta_n\to 0$ as $n\to \infty$, by Theorem~\ref{Baker1}, the hypothesis \eqref{Un} holds for all sufficiently large $n$, by Theorem~\ref{BRS1}. Therefore, since $M(r_n, f^m)>1$, for all $m\in\N$ and sufficiently large~$n$, we have the following corollary of Lemma~\ref{BRS3}.
\begin{corollary}\label{BRS-Cor}
Let~$f$ be a transcendental entire function with a multiply connected wandering domain $U$, let $z_0 \in U$ and let $r_n=|f^n(z_0)|$ for $n\in \N$. Then, for all sufficiently large~$n$, we have
\begin{equation}\label{rnm}
r_{n+m } \geq  M(r_n, f^m)^{1/2}, \text{ for }m \in \N.
\end{equation}
\end{corollary}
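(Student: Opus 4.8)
The plan is to read off Corollary~\ref{BRS-Cor} directly from Lemma~\ref{BRS3}, in two moves: first check that the hypothesis \eqref{Un} of that lemma is satisfied for all large~$n$, and then convert the exponent $1-\delta_n$ appearing in its conclusion \eqref{rnm} into the cleaner exponent $1/2$. To begin, I would record that $r_n\to\infty$: since $f^n(z_0)\in U_n$, Theorem~\ref{Baker1}(3) gives $r_n=|f^n(z_0)|\ge\dist(U_n,0)\to\infty$, and hence $\delta_n=1/\sqrt{\log r_n}\to0$.

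Next I would verify \eqref{Un}. By Theorem~\ref{BRS1} there is a fixed $\alpha>0$ with $U_n\supset A(r_n^{1-\alpha},r_n^{1+\alpha})$ for all large~$n$. Since $\delta_n\to0$ we have $2\pi\delta_n<\alpha$ for all large~$n$, and then, as $r_n>1$, the inclusion $A(r_n^{1-2\pi\delta_n},r_n^{1+2\pi\delta_n})\subset A(r_n^{1-\alpha},r_n^{1+\alpha})\subset U_n$ holds. Thus the hypothesis \eqref{Un} is met for all large~$n$, and Lemma~\ref{BRS3} yields $r_{n+m}\ge M(r_n,f^m)^{1-\delta_n}$ for every $m\in\N$.

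It remains to pass from $1-\delta_n$ to $1/2$. For a base that is at least~$1$, raising to a larger exponent can only increase the value, so whenever $M(r_n,f^m)\ge1$ and $\delta_n\le1/2$ we obtain $M(r_n,f^m)^{1-\delta_n}\ge M(r_n,f^m)^{1/2}$, which is exactly \eqref{rnm}. The one point that needs a little care — and the step I would flag as the only real obstacle, because the quantifiers in the statement fix~$n$ before~$m$ — is the bound $M(r_n,f^m)\ge1$ \emph{uniformly in}~$m$; for each fixed~$m$ this is clear from $r_n\to\infty$, but uniformity is not automatic. This is settled cleanly by observing that $f^n(z_0)$ lies on the circle $|z|=r_n$, so directly from the definition of the maximum modulus, $M(r_n,f^m)\ge|f^m(f^n(z_0))|=|f^{n+m}(z_0)|=r_{n+m}$, and $r_{n+m}>1$ for all large~$n$ since $r_k\to\infty$. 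Choosing~$n$ large enough that simultaneously $\delta_n\le1/2$, the inclusion \eqref{Un} holds, and $r_k>1$ for all $k\ge n$, then gives $r_{n+m}\ge M(r_n,f^m)^{1/2}$ for every $m\in\N$, completing the proof.
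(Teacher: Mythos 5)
Your proposal is correct and follows essentially the same route as the paper: the hypothesis \eqref{Un} is verified via Theorem~\ref{BRS1} together with $\delta_n\to 0$, and the exponent $1-\delta_n$ from Lemma~\ref{BRS3} is weakened to $1/2$ using $M(r_n,f^m)>1$ and $\delta_n\le 1/2$. Your extra observation that $M(r_n,f^m)\ge r_{n+m}$ gives a clean justification, uniform in~$m$, of the bound $M(r_n,f^m)>1$ that the paper simply asserts.
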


\section{Proof of Theorem~\ref{main}}
\setcounter{equation}{0}
Theorem~\ref{main} is an immediate consequence of the following more precise result. For any {\tef} $f$ we denote by $F^*(f)$ the union of those components of $F(f)$ that are \emph{not} simply connected fast escaping wandering domains.

\begin{theorem}[Closely related Fatou sets]\label{main1}
Let~$f$ and~$g$ be permutable transcendental entire functions. Then
\[
F^*(f)\subset F(g)\quad \text{and}\quad F^*(g)\subset F(f).
\]
\end{theorem}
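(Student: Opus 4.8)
The plan is to follow Fatou's strategy in the form used for Theorem~\ref{BergHink}: to prove $F^*(f)\subset F(g)$ it suffices, by Montel's theorem, to show that for each component $U$ of $F(f)$ lying in $F^*(f)$ the whole forward orbit $\bigcup_{n\ge 0}g^n(U)$ stays inside $F(f)$; since $J(f)$ is infinite, the family $(g^n)$ then omits more than two values on $U$ and is normal there, so $U\subset F(g)$. Because $g(U)$ is connected, it lies in a single component of $F(f)$, and the induction on $n$ will run provided we establish the forward invariance
\[
g(F^*(f))\subset F^*(f);
\]
that is, for every component $U\in F^*(f)$ the image $g(U)$ lies in $F(f)$ and is \emph{not} a \sconn\ fast escaping \wand. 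The symmetric inclusion $F^*(g)\subset F(f)$ then follows by interchanging the roles of $f$ and $g$.

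First I would dispose of the components of $F^*(f)$ that are not fast escaping. If $U$ is a component of $F(f)$ with $U\not\subset A(f)$, then the argument from \cite{BH} recorded in Proposition~\ref{Not fast escaping} already gives $g(U)\subset F(f)$; one then has to check that $g(U)$ cannot be a \sconn\ fast escaping \wand, which I would do by comparing escape rates, using the identity $|f^n(g(z))|=|g(f^n(z))|\le M(|f^n(z)|,g)$ together with the fact from \cite{RS2} that a component meeting $A(f)$ lies in $A(f)$. Since, by \cite{BH}, every fast escaping Fatou component is a \wand, the only components left in $F^*(f)$ are the fast escaping ones that are not \sconn; these are precisely the \mconn\ \wand{}s. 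So the heart of the matter is to show that if $U$ is a \mconn\ \wand\ of $f$ then $g(U)$ lies in a \mconn\ \wand\ of $f$, and in particular in $F^*(f)$.

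For this crux I would exploit the absorbing annular structure from \cite{BRS}. Fix $z_0\in U$, write $r_n=|f^n(z_0)|$, and recall from Theorems~\ref{BRS1} and~\ref{BRS2} that $U_n=f^n(U)$ contains the round annulus $A(r_n^{1-\alpha},r_n^{1+\alpha})=:A_n$ surrounding $0$, while $\dist(U_n,0)\to\infty$ by Theorem~\ref{Baker1}. Since $f^n\circ g=g\circ f^n$, the forward $f$-orbit of $g(U)$ is given by $f^n(g(U))=g(U_n)\supset g(A_n)$. The key step is to show that $g(A_n)$ itself contains a round annulus surrounding $0$ whose modulus tends to infinity: as $A_n$ is an annulus about $0$ of modulus $\tfrac{\alpha}{\pi}\log r_n\to\infty$ with inner radius tending to infinity, minimum–modulus estimates for the \tef\ $g$, of the type underlying Lemma~\ref{BRS3}, should show that $g$ maps $A_n$ over a large round annulus about $0$. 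Consequently the component of $F(f)$ containing $g(U)$ has forward images that are bounded, \mconn, and escaping, so it is a \mconn\ \wand; applying Corollary~\ref{BRS-Cor} to this new domain then confirms the fast escape and places $g(U)$ in $F^*(f)$.

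The main obstacle is exactly this annulus–image control for $g$. A \tef\ may have zeros in $A_n$, so $g(A_n)$ need not surround $0$ in the naive sense, and one must instead work on a slightly thinner sub-annulus on which the minimum modulus of $g$ is large (away from an exceptional set of radii), so that $g$ genuinely covers an annulus about $0$ rather than a full disc. Quantifying this, and checking that the resulting annuli escape fast enough under $f$ to satisfy the \mconn\ \wand\ characterization, is the delicate part; the remaining bookkeeping — verifying the forward invariance of $F^*(f)$ and ruling out that a non-fast-escaping component maps into a \sconn\ fast escaping \wand — is comparatively routine given the escape-rate inequality $r_{n+m}\ge M(r_n,f^m)^{1/2}$ of Corollary~\ref{BRS-Cor}.
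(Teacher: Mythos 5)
Your overall architecture matches the paper's: split $F^*(f)$ into the non-fast-escaping components (handled via the Bergweiler--Hinkkanen inclusions $J(f)\subset\overline{A(f)}$ and $g^{-1}(A(f))\subset A(f)$, exactly as in Proposition~\ref{Not fast escaping}) and the multiply connected wandering domains, prove $g(F^*(f))\subset F^*(f)$, and finish with Montel. But there is a genuine gap at the crux. For a multiply connected wandering domain $U$ of $f$, you pass directly from ``$f^n(g(U))=g(U_n)$ contains large annuli about $0$'' to ``the component of $F(f)$ containing $g(U)$ is a multiply connected wandering domain''. This presupposes $g(U)\subset F(f)$, which is precisely what has to be proved; nothing in your outline excludes $g(U)\cap J(f)\neq\emptyset$. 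Indeed, if $g(U)$ did meet $J(f)$, the sets $f^n(g(U))=g(U_n)$ would still be bounded for each $n$ (since each $U_n$ is bounded) and would still eventually contain large annuli surrounding the origin, by the blowing up property; so the properties you propose to verify hold in both cases and cannot yield normality of $(f^n)$ on $g(U)$.

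The paper closes exactly this gap with an argument absent from your proposal: assuming $g(z)\in J(f)$ for some $z\in U$, it uses Lemma~\ref{bup} to find $V\subset U$ with $\overline{f^N(g(V))}\subset U$, and then compares hyperbolic lengths of the curve $\gamma_n=g(C_{n+N})$, the image under $g$ of the circle of radius $r_{n+N}$. Schwarz--Pick applied to $g$ on $A_{n+N}$ makes $\gamma_n$ hyperbolically short, of order $\pi^2/(\alpha\log r_{n+N})$, while the facts that $\gamma_n$ winds around $0$ (Picard plus the argument principle) and lies in $B_n\subset U_n$ force, by Lemma~\ref{ann-hyp}, a lower bound of order $2\pi^2/(b_n\log r_n)$. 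Together these give $\log r_{n+N}<\tfrac{1}{2\alpha}\log r_{n+1}$, contradicting the fast escape rate $r_{n+N}\geq M(r_{n+1},f)^{1/2}$ of Corollary~\ref{BRS-Cor} in view of \eqref{logMr}. Your proposed substitute --- minimum-modulus control showing that $g(A_n)$ covers a large round annulus about $0$ --- is both unestablished (as you acknowledge, a transcendental entire $g$ can have small minimum modulus on every large circle) and, more importantly, insufficient even if true, for the reason above. The remaining pieces of your outline (the non-fast-escaping case, the exclusion of a simply connected image via the winding of $g$ on circles in $A(R_n,2R_n)$, and the concluding Montel step) are essentially those of the paper.
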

Under the hypotheses of Theorem~\ref{main}, we have $F^*(f)=F(f)$ and $F^*(g)=F(g)$, so we deduce from Theorem~\ref{main1} that $F(f)=F(g)$, as required.

The idea of the proof of Theorem~\ref{main1} is to show that, if~$f$ and~$g$ commute and~$U$ is any Fatou component of~$f$ other than a simply connected fast escaping component, then $g(U)\subset F^*(f)$. There are two cases to consider, the first being a version of Theorem~\ref{BergHink}.

\begin{proposition}[Not fast escaping]\label{Not fast escaping} Let $f$ and $g$ be permutable transcendental entire functions and let~$U$ be a Fatou component of $f$ that is not fast escaping. Then $g(U)\subset V$, where~$V$ is a Fatou component of $f$ that is not fast escaping.
\end{proposition}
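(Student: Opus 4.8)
The plan is to reduce the statement to two claims and then isolate the genuinely transcendental difficulty. Since $g$ is continuous, $g(U)$ is connected, so once we know $g(U)\subset F(f)$ it lies in a single Fatou component $V$ of $f$; and since every Fatou component is either contained in $A(f)$ or disjoint from it (by \cite[Theorem~1.2]{RS2}), $V$ will fail to be fast escaping as soon as we exhibit a single point of $g(U)$ outside $A(f)$. Thus it suffices to prove (I) $g(U)\subset F(f)$ and (II) $g(z_0)\notin A(f)$ for some (equivalently, any) $z_0\in U$. The one identity used throughout is the iterated form of \eqref{Commuting}, $f^n\circ g=g\circ f^n$, so that the $f$-orbit of $g(z_0)$ is the $g$-image of the $f$-orbit of $z_0$:
\[
 f^n(g(z_0))=g(f^n(z_0)),\qquad n\in\N.
\]

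For claim (II) I would argue by contradiction, using the standard description (see \cite{RS2}) $A(f)=\bigcup_{L\ge0}f^{-L}(A_R(f))$, where $A_R(f)=\{z:|f^n(z)|\ge M^n(R,f)\text{ for all }n\ge0\}$ and $M^n(R,f)$ is the $n$th iterate of $r\mapsto M(r,f)$, with $R$ so large that $(M^n(R,f))_{n\ge0}$ is strictly increasing. Since $U$ is not fast escaping, $z_0\notin A(f)$, which means that for every $L$ the orbit satisfies $|f^n(z_0)|<M^{n-L}(R,f)$ for arbitrarily large $n$. If, on the contrary, $g(z_0)\in A(f)$, there is $L_0$ with $|f^n(g(z_0))|\ge M^{n-L_0}(R,f)$ for all $n\ge L_0$. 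Evaluating the displayed identity at such a deep dip of the orbit of $z_0$ and bounding the image by the maximum modulus of $g$ gives
\[
 M^{n-L_0}(R,f)\le|g(f^n(z_0))|\le M\bigl(|f^n(z_0)|,g\bigr)\le M\bigl(M^{n-L}(R,f),\,g\bigr).
\]
At this point the commuting relation must be used to control the growth of $g$ relative to the iterates of $f$: one needs that $g$ grows no faster than a fixed iterate of $f$, say $M(s,g)\le M^{c}(s,f)$ for all large $s$, with $c$ independent of $n$. Granting this, the right-hand side is at most $M^{n-L+c}(R,f)$, and strict monotonicity of $(M^n(R,f))$ forces $L\le L_0+c$; choosing $L=L_0+c+1$ yields a contradiction, so $g(z_0)\notin A(f)$. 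Equivalently, this step asserts that $g$ maps the complement of $A(f)$ into itself, which one may also phrase through the complete invariance of the fast escaping set, following \cite{BH}.

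For claim (I) I would show that $\{f^n\}$ is normal on the open set $g(U)$. Because $U\subset F(f)$, the family $\{f^n\}$ is normal on $U$, and by the commuting identity $\{f^n\circ g\}=\{g\circ f^n\}$ on $U$. Along any subsequence for which $f^{n_k}\to\phi$ locally uniformly on $U$ with $\phi$ finite, $f^{n_k}(g(z))=g(f^{n_k}(z))\to g(\phi(z))$ locally uniformly, so such subsequences are harmless; transferring this normality from $U$ to $g(U)$ is routine, using local inverse branches of $g$ away from its critical points and the fact that $J(f)$ has no isolated points to patch the remaining discrete set. The only obstruction is a subsequence along which $f^{n_k}\to\infty$ on $U$ — precisely the escaping behaviour exemplified by a Baker domain — since then post-composition with $g$ need not converge in $\C\cup\{\infty\}$. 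This is exactly where $U$ being \emph{not fast escaping} is essential: by the mechanism of claim (II) the images $g(f^{n_k}(z_0))=f^{n_k}(g(z_0))$ stay below the fast-escaping thresholds, and the same growth control prevents these subsequences from being non-normal on $g(U)$, giving $g(U)\subset F(f)$.

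The main obstacle is the growth comparison between $g$ and the iterates of $f$ underlying both claims: for arbitrary entire $f,g$ no bound of the form $M(s,g)\le M^{c}(s,f)$ holds, so the argument rests essentially on \eqref{Commuting}, and this is the content supplied by the analysis in \cite{BH}, of which the present proposition is a reformulation. By contrast, Baker's Theorem~\ref{Baker} is not available here, since it requires a finite-limit subsequence in \emph{every} component of $F(f)$, whereas we must allow $U$ itself (and other components of $F(f)$) to be escaping.
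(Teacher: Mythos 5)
There is a genuine gap in your argument for claim (I); claim (II) is essentially sound as a reduction. Your claim (II) — that $g$ maps the complement of $A(f)$ into itself, equivalently $g^{-1}(A(f))\subset A(f)$ — is indeed one of the two ingredients the paper takes directly from \cite{BH}, and the growth comparison you flag as the ``main obstacle'' is exactly what permutability supplies there, so deferring it to \cite{BH} is legitimate. The problem is claim (I). Knowing that the orbit $f^{n}(g(z_0))=g(f^{n}(z_0))$ ``stays below the fast-escaping thresholds'' does not give normality of $(f^{n})$ near $g(z_0)$: points of $J(f)$ need not be fast escaping (repelling periodic points, for instance, lie in $J(f)$ and have bounded orbits), so non-membership in $A(f)$ says nothing by itself about membership in $F(f)$. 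Likewise, your transfer of normality from $U$ to $g(U)$ only handles subsequences of $(f^{n})$ with finite limit functions; on subsequences tending to $\infty$ — which must be allowed, since $U$ may be escaping without being fast escaping — the maps $g\circ f^{n_k}$ are uncontrolled (by Picard, $g$ takes essentially every value in any neighbourhood of $\infty$), and growth control along a single orbit cannot rule out non-normality in a neighbourhood.

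The missing idea is the density of the fast escaping set in the Julia set, $J(f)\subset\overline{A(f)}$ (\cite[Lemma~3]{BH}), which is precisely what converts information about $A(f)$ into information about $F(f)$. The paper's proof is then very short: if $g(z)\in J(f)$ for some $z\in U$, the open set $g(U)$ is a neighbourhood of $g(z)$ and so meets $A(f)$; hence some $z'\in U$ has $g(z')\in A(f)$, and $g^{-1}(A(f))\subset A(f)$ forces $z'\in A(f)$, contradicting the fact that $U$, being a Fatou component not contained in $A(f)$, is disjoint from $A(f)$ by \cite[Theorem~1.2]{RS2}. With this one additional fact your claim (II) machinery suffices and claim (I) becomes a two-line consequence; without it, claim (I) does not close.
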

\begin{proof}
Suppose for a contradiction that $z\in U$ but $g(z)\in J(f)$. In \cite[Lemma~3 and Theorem~5]{BH} it was shown that for any {\tef}~$f$ we have
\begin{equation}\label{JfAf}
J(f)\subset \ov{A(f)},
\end{equation}
and that for permutable {\tef}s $f$ and $g$ we have
\begin{equation}\label{backfast}
g^{-1}(A(f))\subset A(f).
\end{equation}
By \eqref{JfAf}, there exists a point~$z'$ in~$U$ such that $g(z')\in A(f)$. By \eqref{backfast}, however, we deduce that $z'\in A(f)$, which is a contradiction. Hence $g(z)\in F(f)$. Also $g(z)$ is not fast escaping, by \eqref{backfast}, as required.
\end{proof}

Our key new contribution to the commuting problem for {\tef}s is the following proposition.

\begin{proposition}[Multiply connected wandering domains]\label{MCWD in Fatou Set} Let $f$ and $g$ be permutable transcendental entire functions and let~$U$ be a multiply connected wandering domain of~$f$. Then $g(U)\subset V$, where $V$ is a multiply connected wandering domain  of~$f$.
\end{proposition}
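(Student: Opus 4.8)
\emph{Proof plan.} Since $U$ is a multiply connected wandering domain we have $U\subset A(f)$ and $f^n\to\infty$ locally uniformly on $U$, so the argument of Proposition~\ref{Not fast escaping}, which used that $U$ is \emph{not} fast escaping (via \eqref{backfast}), is no longer available. Instead I would establish the conclusion in three steps: (i) show that $g(U)\subset F(f)$, so that, being connected, $g(U)$ lies in a single Fatou component $V$ of~$f$; (ii) show that $V$ meets $A(f)$, so that $V\subset A(f)$ by \cite[Theorem~1.2]{RS2}, whence $V$ is a wandering domain by \cite{BH} (in particular $V$ is not a Baker domain, these never being fast escaping); and (iii) show that $V$ is multiply connected. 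Steps (i)--(iii) together give that $V$ is a multiply connected wandering domain of~$f$ containing $g(U)$.

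For steps (i) and (ii) the key identity is $f^n\circ g=g\circ f^n$, giving $f^n(g(z))=g(f^n(z))$ for $z\in U$, combined with the absorbing structure from \cite{BRS}. Fixing $z_0\in U$ and writing $r_n=|f^n(z_0)|$, Theorem~\ref{BRS2} confines the orbit of each compact $K\subset U$ to the annuli $B_n=A(r_n^{a_n},r_n^{b_n})\subset U_n$, while Theorem~\ref{BRS1} supplies the full round annuli $A_n=A(r_n^{1-\alpha},r_n^{1+\alpha})\subset U_n$, on which $|g|$ attains values as large as $M(r_n^{1+\alpha},g)$. The plan is to combine this with the fast-escape estimate $r_{n+m}\ge M(r_n,f^m)^{1/2}$ of Corollary~\ref{BRS-Cor} to show that the forward orbit under~$f$ of $g(U)$ escapes fast enough that $V$ meets $A(f)$, and to deduce Fatou membership from the resulting control. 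I expect this to be \textbf{the main obstacle}: because $f^n\to\infty$ on~$U$ while $g$ has an essential singularity at~$\infty$, the family $\{g\circ f^n\}$ is not normal for trivial reasons, and $|g|$ may dip arbitrarily close to its zeros along the orbit of any \emph{single} point, so one cannot simply track $g(f^n(z_0))$. The point of working with the full annuli $A_n$ and the maxima of $|g|$ on them, rather than with individual orbit points, is precisely to circumvent these dips.

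Granting (i) and (ii), step (iii) is clean and geometric. For large~$n$ take the circle $\gamma_n=\{|z|=r_n\}\subset A_n\subset U_n$, which surrounds~$0$, and consider its image $g(\gamma_n)$. Since $g(U)\subset V$ we have $g(U_n)=g(f^n(U))=f^n(g(U))\subset f^n(V)=:V_n$, so $g(\gamma_n)$ is a closed curve in $V_n$. By the argument principle, for any value $a=g(\zeta)$ with $\zeta$ in the disc $\{|z|<r_n\}$ and $a\notin g(\gamma_n)$, the winding number of $g(\gamma_n)$ about~$a$ equals the number of $a$-points of~$g$ in $\{|z|<r_n\}$, and so is positive. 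By Theorem~\ref{Baker1}(2) the bounded complementary component of $U_n$ that contains $U_{n-1}$ cannot meet $A_n\subset U_n$, hence lies inside the inner boundary $\{|z|=r_n^{1-\alpha}\}$; thus $U_{n-1}\subset\{|z|<r_n\}$ and every point of $g(U_{n-1})$ is such a value~$a$. If $V_n$ were simply connected, then $g(\gamma_n)$ would be null-homotopic in $V_n$ and so would have winding number~$0$ about every point of $\C\setminus V_n$, forcing all these values into $V_n$; taking a point $\zeta\in U_{n-1}$ with $g'(\zeta)\ne0$ and $g(\zeta)\notin g(\gamma_n)$ would then give $g(\zeta)\in V_n$. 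But $g(\zeta)\in g(U_{n-1})=f^{n-1}(g(U))\subset V_{n-1}$, so $V_n\cap V_{n-1}\ne\emptyset$, contradicting the fact that the iterates $V_n$ of the wandering domain~$V$ are pairwise disjoint. Hence $V_n$ is multiply connected for all large~$n$, and since the $V_n$ are the iterates of a single wandering domain (whose connectivity is eventually constant, cf.\ Theorem~\ref{Baker1}), $V$ is a multiply connected wandering domain, completing the proof.
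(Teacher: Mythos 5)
There is a genuine gap, and it is exactly where you flag ``the main obstacle'': the statement that $g(U)\subset F(f)$, which is the essential content of the proposition, is not proved, and the route you sketch for it cannot work. Arguing that the forward orbit of $g(U)$ under $f$ escapes fast, so that the putative component $V$ meets $A(f)$, already presupposes that $g(U)$ lies in the Fatou set; and even a direct proof that $g(U)\subset A(f)$ would give no information about membership of $F(f)$, since $A(f)$ always meets $J(f)$ --- indeed $J(f)\subset \overline{A(f)}$ by \eqref{JfAf}, and the whole reason Proposition~\ref{Not fast escaping} does not cover this case is that its mechanism breaks down precisely on fast escaping components. The paper's argument for this step is by contradiction and rests on a different idea. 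If $g(z)\in J(f)$ for some $z\in U$, then $g(U)$ is an open neighbourhood of a point of $J(f)$, so the blowing up property (Lemma~\ref{bup}) yields $z_0\in U$, $N\ge 2$ and a neighbourhood $V\subset U$ of $z_0$ with $\overline{f^N(g(V))}\subset U$. One then compares two estimates for the hyperbolic length of the closed curve $\gamma_n=g(C_{n+N})$, where $C_{n+N}$ is the circle $|z|=r_{n+N}$ inside $A_{n+N}\subset f^{n+N}(V)$: by Schwarz--Pick and Lemma~\ref{ann-hyp} its length in $g(A_{n+N})$ is at most $\pi^2/(\alpha\log r_{n+N})$, while, since $\gamma_n\subset f^n(K)\subset B_n$ (Theorem~\ref{BRS2}) and $\gamma_n$ winds round $0$ (Picard plus the argument principle), its length in $B_n$ is at least $2\pi^2/(b_n\log r_n)$. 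Comparing gives $\log r_{n+N}<\tfrac{1}{2\alpha}\log r_{n+1}$, which contradicts the growth estimate $\log r_{n+N}\ge\tfrac12\log M(r_{n+1},f)$ of Corollary~\ref{BRS-Cor} together with \eqref{logMr}. This hyperbolic-length contradiction is the missing idea; nothing in your plan points towards it.

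Your step (iii) is close in spirit to the paper's concluding argument (both play the winding of $g$-images of circles, guaranteed by Picard's theorem, against simple connectivity), but as written it is circular: the contradiction you reach is $V_n\cap V_{n-1}\ne\emptyset$, which is only a contradiction once the $V_n$ are known to be pairwise disjoint, i.e.\ once $V$ is known to be a wandering domain; you derive that in step (ii) from $V\subset A(f)$, but the claim that $V$ meets $A(f)$ is left unproved and runs into the same single-orbit difficulty you yourself describe in step (i). The paper sidesteps all of this: if $V$ were simply connected then every $V_n$ would be simply connected by \cite[Lemma~4.2]{RS3}, and the $g$-image of a circle in $A(R_n,2R_n)\subset f^n(U)$ winds round a value near $0$ while containing points of modulus $M(r,g)>r$, which forces the simply connected component $V_n$ to meet the multiply connected component $f^n(U)$ --- impossible. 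Note also that once $V$ is known to be multiply connected it is automatically a wandering domain (every multiply connected Fatou component of a transcendental entire function is wandering, by Baker), so your step (ii) is not needed at all.
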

Theorem~\ref{main1} follows immediately from Propositions~\ref{Not fast escaping} and~\ref{MCWD in Fatou Set}, since together they imply that $g(F^*(f))\subset F^*(f)$, so $F^*(f)\subset F(g)$ by Montel's theorem.

{\it Remark}\;\; Baker proved in \cite{Ba2} that if the transcendental entire functions~$f$ and~$g$ commute, then $g(J(f)\subset J(f)$. It follows that in both Propositions~\ref{Not fast escaping} and~\ref{MCWD in Fatou Set} we can replace the statement that $g(U)\subset V$ by the stronger statement that $g(U)=V$.

The following well-known result is needed in the proof of Proposition~\ref{MCWD in Fatou Set} (see \cite[Lemma 2.2]{Ba2} or \cite{BerSur}).
\begin{lemma}[Blowing up property]\label{bup}
Let  $f$ be a {\tef}  function and $V$ be a neighborhood of a point $\zeta\in J(f)$. For any compact set $K$ not containing an exceptional point of $f$ there exists $n_K$ such that $f^n(V)\supset K$ for all $n\geq n_K$.
\end{lemma}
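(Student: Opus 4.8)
The plan is to deduce the lemma from Montel's theorem together with the density of repelling periodic points in $J(f)$. I would argue in two stages: first establish the \emph{weak} statement that the union $\bigcup_{n\ge1}f^n(V)$ omits at most the exceptional set of~$f$, and then upgrade this to the \emph{eventual} covering $f^n(V)\supset K$ for all large~$n$.

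For the weak statement, note that since $\zeta\in J(f)$ the family $\{f^n|_V:n\in\N\}$ is not normal, so by Montel's theorem it cannot omit two distinct finite values (the value $\infty$ being automatically omitted, as each $f^n$ is entire). Hence $O:=\bigcup_{n\ge1}f^n(V)$ omits at most one point $a\in\C$. If such a point exists, then $f^{-1}(a)\cap O=\emptyset$, because $b\in f^{-1}(a)\cap f^n(V)$ would force $a\in f^{n+1}(V)\subset O$; iterating, the whole backward orbit $\bigcup_{k\ge0}f^{-k}(a)$ is disjoint from~$O$, and as $O$ omits at most one point this backward orbit must reduce to $\{a\}$, so $a$ is an exceptional point of~$f$. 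Thus $\bigcup_{n\ge1}f^n(V)\supset\C\setminus E(f)$, where $E(f)$ denotes the exceptional set, which for a {\tef} consists of at most one point; in particular the union contains the given compact set~$K$.

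The main obstacle is the passage from this union, which only shows that each point of~$K$ is covered at \emph{some} time, to a single range of indices covering all of~$K$ \emph{simultaneously} and for all large~$n$. Here I would exploit a repelling periodic point. Since $V$ meets $J(f)$ at $\zeta$ and repelling periodic points are dense in $J(f)$, there is a repelling periodic point $p$ with $p\in D\subset V$, where~$D$ is a linearising disc for~$f^q$ at~$p$ ($q$ being the period), chosen so small that $f^q(D)\supset D$. Then the images $f^{qk}(D)$ form an \emph{increasing} sequence of open sets. Applying the weak statement of the previous paragraph to the map~$f^q$ and the neighbourhood~$D$ of the point $p\in J(f^q)=J(f)$, and using $E(f^q)=E(f)$, gives $\bigcup_k f^{qk}(D)\supset\C\setminus E(f)\supset K$; since this union is increasing and~$K$ is compact, a single index works: there is $k_0$ with $f^{qk}(D)\supset K$ for all $k\ge k_0$.

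Finally I would remove the restriction to multiples of~$q$. For each residue $r\in\{0,\dots,q-1\}$ the set $f^r(D)$ is an open neighbourhood of the repelling periodic point $f^r(p)$ of period~$q$, so it contains a linearising disc and the same argument yields $k_r$ with $f^{qk}\bigl(f^r(D)\bigr)\supset K$, that is $f^{qk+r}(D)\supset K$, for all $k\ge k_r$. Choosing $n_K>q\max_r k_r+q$ and writing an arbitrary $n\ge n_K$ as $n=qk+r$, we obtain $f^n(V)\supset f^n(D)=f^{qk}\bigl(f^r(D)\bigr)\supset K$, as required. The only ingredients beyond Montel's theorem are the density of repelling periodic points in $J(f)$ and the fact that a {\tef} has at most one exceptional point; the whole weight of the proof rests on converting ``covered at some time'' into ``covered at every large time'', which is exactly what the nested, increasing family $f^{qk}(D)$ coming from the repelling cycle achieves.
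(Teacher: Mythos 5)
Your proof is correct, and it coincides with the standard argument: the paper itself offers no proof of this lemma, quoting it as well known with references to \cite[Lemma 2.2]{Ba2} and \cite{BerSur}, and your two-stage scheme (Montel's theorem showing $\bigcup_n f^n(V)$ omits at most the exceptional point, then a linearising disc at a repelling periodic point furnishing the increasing family $f^{qk}(D)$ that upgrades ``covered at some time'' to ``covered at every large time'', with residues mod~$q$ handled separately) is essentially the proof found in those sources. The only ingredients you assert without proof---density of repelling periodic points in $J(f)$, $J(f^q)=J(f)$, and $E(f^q)=E(f)$---are standard facts that the paper itself invokes with citations, so nothing is missing.
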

An \emph{exceptional point} is one whose backward orbit is finite and, by Montel's theorem, a {\tef} has at most one exceptional point in~$\C$.

We also need the following result about the hyperbolic lengths of closed curves in annuli; see \cite[Section~12.2]{BM}, for example, for the special case when $r=1/R$ from which Lemma~\ref{ann-hyp} follows by the invariance of hyperbolic length under a conformal mapping.

\begin{lemma}[Closed curves in annuli]\label{ann-hyp}
Let $\gamma$ be a piecewise smooth closed curve in the annulus $A=A(r,R)$, where $0<r<R$, such that $\ind\,(\gamma,0)=n \neq 0$. Then the hyperbolic length $\ell(\gamma)$ of $\gamma$ in~$A$ satisfies
\[
\ell(\gamma)\ge \frac{2\pi^2 n}{\log R/r},
\]
with equality if and only if~$\gamma$ is a monotonic parametrization of the unit circle traversed~$n$ times.
\end{lemma}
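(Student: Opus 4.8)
The plan is to compute the hyperbolic density of $A=A(r,R)$ explicitly and then estimate the length integral by discarding the radial part of $|dz|$ and bounding the density below by its value on the core curve. First I would pass to the universal cover via $w=\log z$, which identifies $A(r,R)$ with the quotient of the vertical strip $S=\{w:\log r<\operatorname{Re} w<\log R\}$ by the translation $w\mapsto w+2\pi i$; this covering is a local isometry for the hyperbolic metrics. Using the conformal map $w\mapsto\exp\!\big(i\pi(w-\log r)/\log(R/r)\big)$ from $S$ onto the upper half-plane, on which the curvature $-1$ density is $1/\operatorname{Im}\zeta$, I would pull back to obtain on $A$ the density
\[
\lambda_A(z)=\frac{\pi}{|z|\,\log(R/r)\,\sin\!\big(\pi\,\tfrac{\log(|z|/r)}{\log(R/r)}\big)}.
\]
Equivalently, one may take the route indicated after the statement: the scaling $z\mapsto z/\sqrt{rR}$ is a hyperbolic isometry of $A(r,R)$ onto the symmetric annulus $A(1/\rho,\rho)$ with $\rho=\sqrt{R/r}$ and preserves the winding number about $0$, so it suffices to treat that case, where the core is the unit circle and the density is standard.

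Next, writing $z=\rho e^{i\theta}$ along $\gamma$ I would use $|dz|\ge|z|\,|d\theta|$ together with $\sin(\,\cdot\,)\le1$ to bound the integrand below, obtaining
\[
\ell(\gamma)=\int_\gamma\lambda_A(z)\,|dz|\ \ge\ \int_\gamma\frac{\pi}{\log(R/r)}\,|d\theta|\ \ge\ \frac{\pi}{\log(R/r)}\,\Big|\int_\gamma d\theta\Big|.
\]
Since $\gamma$ is closed, $\int_\gamma d\theta=2\pi\,\ind(\gamma,0)=2\pi n$, so the right-hand side equals $2\pi^2|n|/\log(R/r)$; this is the asserted bound, the sign being immaterial since reversing orientation fixes $\ell(\gamma)$ and flips $n$, so the statement should be read with $|n|$ (or with $n>0$).

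Finally I would read off the equality case by tracking where each inequality is sharp. Equality in $|dz|=|z|\,|d\theta|$ forces $|z|$ constant along $\gamma$; the requirement $\sin\!\big(\pi\,\tfrac{\log(|z|/r)}{\log(R/r)}\big)=1$ then pins that radius to the core $|z|=\sqrt{rR}$; and $\int_\gamma|d\theta|=\big|\int_\gamma d\theta\big|$ forces $\theta$ to be monotonic. Hence equality holds precisely when $\gamma$ is a monotonic parametrization of the core circle $|z|=\sqrt{rR}$ traversed $n$ times, which is the unit circle exactly in the symmetric normalization $r=1/R$ of the statement.

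I expect the only genuine work to be the derivation of the explicit density $\lambda_A$: assembling the covering map with the half-plane metric and fixing the correct normalization (curvature $-1$) so that the core geodesic has hyperbolic length $2\pi^2/\log(R/r)$, which is what produces the constant $2\pi^2$. Once that density is in hand the length estimate and the equality analysis follow immediately from the displayed chain of inequalities, so the normalization of the metric is the main point to get right.
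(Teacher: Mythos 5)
Your proof is correct; the paper itself gives no argument for this lemma, simply citing [BM, Section 12.2] for the symmetric case $r=1/R$, and your computation of the density $\lambda_A$ followed by the estimates $|dz|\ge |z|\,|d\theta|$ and $\sin(\cdot)\le 1$ is exactly the standard argument behind that reference. Your two caveats --- that the lower bound should be read with $|n|$ (or with $n>0$, which is all that is used later), and that ``the unit circle'' in the equality case means the core circle $|z|=\sqrt{rR}$ for a general annulus --- are both accurate readings of the statement and do not affect its use in the paper.
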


\begin{proof}[Proof of Proposition~\ref{MCWD in Fatou Set}]
Let~$U$ be a multiply connected wandering domain of~$f$. We begin by showing that $g(U)\subset F(f)$. Suppose for a contradiction that there exists $z\in U$ with $g(z)\in J(f)$. Since holomorphic maps are open, $g(U)$ is an open neighborhood of $g(z)$. So, by Lemma~\ref{bup}, there exists a point $z_0\in U$ such that $f^N(g(z_0))\in U$ for some $N\geq 2$. Hence there exists a neighborhood $V\subset U$ of $z_0$ with
\begin{equation}\label{Eq1}
\ov{f^N(g(V))}\subset U.
\end{equation}
Let $r_n:=|f^n(z_0)|$ for $n\in\N$. By Theorem~\ref{Baker1} and Theorem~\ref{BRS1}, we have $r_n \to \infty$ as $n\to\infty$ and the sequence $(r_n)$ is eventually strictly increasing.

As before, let $U_n=f^n(U)$ for $n\in \N$. By Theorem~\ref{BRS1}, applied to $V$, there exists $\alpha>0$ and $N_1\in\N$ such that, for $n\geq N_1$,
\begin{equation}\label{Eq2}
U_{n+N}\supset f^{n+N}(V)\supset \AnN:= A(r_{n+N}^{1-\alpha},r_{n+N}^{1+\alpha}).
\end{equation}
Consider the circle $C_{n+N}=\{z\in\C: |z|=r_{n+N}\}$ and its image $\gamma_n$ under $g$. By \eqref{Eq2}, the commuting hypothesis and \eqref{Eq1}, applied in this order, we have
\begin{equation}\label{Eq3}
\gamma_n =g(C_{n+N})\subset g(\AnN)\subset g(f^{n+N}(V))= f^{n+N}(g(V))\subset f^n(U)=U_n.
\end{equation}
Recall that $r_{n+N}\to\infty$ as $n\to \infty$. We claim that, if $n$ is sufficiently large, then $\gamma_n=g(C_{n+N})$ must wind round the origin at least once. In fact, by continuity and the fact that $\gamma_n\subset U_n$, we have $\ind(\gamma_n,0)=\ind(\gamma_n,\zeta)$ for all $\zeta\ne 0$ sufficiently small. By Picard's Theorem,~$g$ takes all values with at most one exception infinitely often.  Hence~$g$ takes some small value $\zeta$ infinitely often and so, by the argument principle,  $\ind(\gamma_n,\zeta) \to \infty$ as $n\to\infty$. Thus $\ind(\gamma_n,0)\to\infty$ as $n\to\infty$ also, so~$\gamma_n$ winds round zero at least once for $n$ sufficiently large (see Figure~1).

\begin{figure}[hbt!]
\begin{center}
\def\svgwidth{1.2\textwidth}

\begingroup%
  \makeatletter%
  \providecommand\color[2][]{%
    \renewcommand\color[2][]{}%
  }%
  \providecommand\transparent[1]{%

    \renewcommand\transparent[1]{}%
  }%
  \providecommand\rotatebox[2]{#2}%
  \ifx\svgwidth\undefined%
    \setlength{\unitlength}{5413.08910476bp}%
    \ifx\svgscale\undefined%
      \relax%
    \else%
      \setlength{\unitlength}{\unitlength * \real{\svgscale}}%
    \fi%
  \else%
    \setlength{\unitlength}{\svgwidth}%
  \fi%
  \global\let\svgwidth\undefined%
  \global\let\svgscale\undefined%
  \makeatother%
  \begin{picture}(1,0.50715027)%
    \put(0,0){\includegraphics[width=\unitlength]{MCWDPictureNested2.pdf}}%
    \put(0.13763298,0.35481415){\color[rgb]{0,0,0}\makebox(0,0)[lb]{\smash{$U$}}}%
    \put(0.24452879,0.17946202){\color[rgb]{0,0,0}\makebox(0,0)[lb]{\smash{$f^n$}}}%
    \put(0.28590004,0.36281605){\color[rgb]{0,0,0}\makebox(0,0)[lb]{\smash{$f^{n+N}$}}}%
    \put(0.84,0.4){\color[rgb]{0,0,0}\makebox(0,0)[lb]{\smash{$U_{n+N}$}}}%
    \put(0.77,0.33){\color[rgb]{0,0,0}\makebox(0,0)[lb]{\smash{$A_{n+N}$}}}%
    \put(0.01345213,0.27739908){\color[rgb]{0,0,0}\makebox(0,0)[lb]{\smash{$V$}}}%
    \put(0.05,0.22){\color[rgb]{0,0,0}\makebox(0,0)[lb]{\small{$f^N \circ g$}}}%
    \put(0.12930144,0.26247005){\color[rgb]{0,0,0}\makebox(0,0)[lb]{\smash{$f^N(g(V))$}}}%
    \put(0.688,0.28){\color[rgb]{0,0,0}\makebox(0,0)[lb]{\smash{$U_n$}}}%
    \put(0.66356604,0.18323484){\color[rgb]{0,0,0}\makebox(0,0)[lb]{\smash{$B_n$}}}%
    \put(0.64,0.13){\color[rgb]{0,0,0}\makebox(0,0)[lb]{\smash{$f^N$}}}%
    \put(0.25469035,0.45504275){\color[rgb]{0,0,0}\makebox(0,0)[lb]{\smash{$f^n$}}}%
  \end{picture}%
\endgroup%

\end{center}
\caption{\small A sketch of the wandering domain $U$ and its images $U_{n+N}$ and $U_n$. Images of $V$ are shaded in light blue, while $C_{n+N}$ and $\gamma_n$ are drawn in red.}
\label{MCWDFigure}
\end{figure}

Now let $\dnu$ be the hyperbolic length of $\gamma_n$ in $g(\AnN)$ and let $\dnd$ be the hyperbolic length of $\CnN$ in $\AnN$. By \eqref{Eq2} and Lemma~\ref{ann-hyp}, we have
\[
\dnd=\frac{\pi^2}{\alpha\log \rnN}.
\]
Since $g:\AnN\to g(\AnN)$ is holomorphic, and $\gamma_n\subset g(\CnN)$, we deduce by the contracting property of the hyperbolic metric that
\begin{equation}\label{Eq4}
\dnu\leq\dnd=\frac{\pi^2}{\alpha\log \rnN}.
\end{equation}
Next, by \eqref{Eq1}, we can choose a compact subset $K$ of $U$ containing both $z_0$ and $f^N(g(V))$. By~\eqref{Eq3}, we have
\[
\gamma_n\subset g(A_{n+N})\subset g(f^{n+N}(V))= f^{n}(f^N(g(V)))\subset f^n(K).
\]
By Theorem~\ref{BRS2}, there exist $N_2\in\N$ and $a_n,b_n$, for $n\ge N_2$, such that, for $n\ge N_2$,
\[
0< a_n\leq 1-\alpha\leq 1+\alpha\leq b_n<\infty
\]
and
\begin{equation}\label{Bn}
f^n(K)\subset B_n:=A({r_n}^{a_n},{r_n}^{b_n})\subset U_n.
\end{equation}
Since
\[
g(\AnN)\subset f^n(f^N(g(V)))\subset f^n(K)\subset B_n,
\]
we deduce, by the comparison principle for the hyperbolic metric, that  $\dnu$ is greater than or equal to the hyperbolic length of $\gamma_n$ in $B_n$.
Hence, using the fact that $\gamma_n$ winds at least once around $0$, we deduce by Lemma~\ref{ann-hyp} that
\[
\dnu\geq\frac{2\pi^2}{(b_n-a_n)\log r_n}>\frac{2\pi^2}{b_n\log r_n}.
\]
Hence, by~\eqref{Eq4},
\[
\frac{2\pi^2}{b_n\log r_n} < \frac{\pi^2}{\alpha\log \rnN}
\]
and so
\begin{equation}\label{rnN}
\log \rnN < \frac{1}{2\alpha}\log r_n^{b_n}.
\end{equation}
For sufficiently large $n$, $U_{n+1}$ surrounds $U_n$, by Theorem~\ref{Baker1}. Since
\[
\{z\in\C: |z|=r_n^{b_n} \}\subset \ov{B_n}\subset\ov{U_n},
\]
by~\eqref{Bn}, and
\[
\{z\in\C: |z|=\rnu \}\subset U_{n+1},
\]
we have $r_n^{b_n}<\rnu$. Hence, by~\eqref{rnN},
\begin{equation}\label{Eq5}
\log \rnN < \frac{1}{2\alpha}\log\rnu.
\end{equation}
By Corollary~\ref{BRS-Cor}, with $m=1$, and the fact that $N\ge 2$, we deduce that, for sufficiently large~$n$,
\begin{equation}\label{doubt}
\log\rnN \geq \frac{1}{2} \log M(r_{n+N-1},f)\geq \frac{1}{2} \log M(\rnu,f).
\end{equation}

Thus it follows from~\eqref{Eq5} that, for sufficiently large~$n$,
\[
\log M(\rnu,f) < \frac{1}{\alpha}\log\rnu\,,
\]
which contradicts \eqref{logMr}. Hence $g(U)\subset F(f)$.

Finally we claim that $g(U)$ must be contained in a multiply connected wandering domain of~$f$. Indeed, if
\[
g(U)\subset V,\quad\text{where } V \text{ is a simply connected Fatou component of } f,
\]
then, for $n\in \N$,
\begin{equation}\label{Vn}
g(f^n(U))=f^n(g(U))\subset f^n(V)\subset V_n,
\end{equation}
where $V_n$ is also a simply connected Fatou component of $f$; see \cite[Lemma~4.2]{RS3}, for example.

But we know from Theorem~\ref{BRS1} that, for sufficiently large $n$, the set $f^n(U)$ contains an open annulus of the form $A(R_n,2R_n)$, where $R_n\to \infty$ as $n\to\infty$. For $n$ sufficiently large and $r\in (R_n, 2R_n)$, the image of the circle $\{z:|z|=r\}$ under the function~$g$ winds at least once round a value close to~$0$, by Picard's theorem, and contains points of modulus $M(r,g)>r$, and also lies in $V_n$, by \eqref{Vn}. Since $V_n$ is a simply connected Fatou component of~$f$, it must meet $A(R_n,2R_n)$ and hence $f^n(U)$, and this is impossible.

Therefore $g(U) \subset V$, where~$V$ is a multiply connected wandering domain of~$f$. This completes the proof of Proposition~\ref{MCWD in Fatou Set}.
\end{proof}

To end the paper we point out an intriguing consequence of Theorem~\ref{main1}.

\begin{corollary}[Shared wandering domains]\label{shared}
Let $f$ and $g$ be permutable {\tef}s, and suppose that $U$ is a multiply connected wandering domain of $f$. Then there exists $N\in \N$ such that, for all $n\ge N$,
\[
f^n(U)\text{ is a multiply connected wandering domain of both }f \text{ and } g.
\]
In particular, $J(g) \ne \C$.
\end{corollary}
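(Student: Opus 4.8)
The plan is to show that, for all large $n$, the set $U_n:=f^n(U)$ is itself a Fatou component of $g$, not merely contained in one. Since each $U_n$ is a multiply connected wandering domain of~$f$ (Theorem~\ref{Baker1}), it is certainly not a simply connected fast escaping wandering domain, so $U_n\subset F^*(f)\subset F(g)$ by Theorem~\ref{main1}. Let $W_n$ denote the component of $F(g)$ containing $U_n$. The whole problem reduces to proving that $W_n=U_n$ for all sufficiently large~$n$: granting this, $U_n$ is a multiply connected component of $F(g)$, hence a wandering domain of~$g$ by the classical result of Baker that a transcendental entire function has no multiply connected periodic or preperiodic Fatou components. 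This gives the displayed conclusion, and $J(g)\ne\C$ is then immediate since $U_n\subset F(g)$ forces $F(g)\ne\emptyset$.

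First I would set up the following dichotomy. Suppose $W_n\ne U_n$. Since $U_n$ is open, connected and properly contained in the connected open set $W_n$, it cannot be closed in $W_n$, so $W_n$ meets $\partial U_n$. As $\partial U_n\subset J(f)$, this shows $W_n\not\subset F(f)$, and therefore $W_n\notin F^*(g)$ by the second inclusion $F^*(g)\subset F(f)$ of Theorem~\ref{main1}. By the definition of $F^*(g)$, this means $W_n$ is a \emph{simply connected} fast escaping wandering domain of~$g$. Thus everything comes down to excluding the possibility that the multiply connected domain $U_n$ sits strictly inside a simply connected component of $F(g)$.

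The hard part, and the step I expect to be the main obstacle, is exactly this exclusion; I would handle it using the absorbing annuli of Theorem~\ref{BRS1}. For large~$n$ we have $U_n\supset A(r_n^{1-\alpha},r_n^{1+\alpha})$ with $r_n=|f^n(z_0)|\to\infty$, so for large~$n$ this annulus is centred at~$0$ with inner radius exceeding~$1$ and hence surrounds the origin. If $W_n$ were simply connected, then, containing a loop in this annulus that winds once around~$0$, it would have to contain the whole bounded complementary component of that loop; in particular $\{z:|z|<r_n^{1-\alpha}\}\subset W_n$, and so $0\in W_n$. The key point is that this containment, if it occurred for infinitely many~$n$, would be self-defeating: all such $W_n$ share the point~$0$ and are therefore the \emph{same} component $W^*$ of $F(g)$, which would then contain the disks $\{z:|z|<r_n^{1-\alpha}\}$ of radii tending to infinity, forcing $W^*=\C$. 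This is impossible because $J(g)\ne\emptyset$, so $F(g)\ne\C$. Hence the ``simply connected'' case can hold for only finitely many~$n$, and for all large~$n$ we must have $W_n=U_n$, completing the argument.

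I would expect the only delicate points in filling this in to be the standard planar topology (a loop winding around~$0$ in a simply connected subdomain of~$\C$ forces the origin, and the filled disk, into the domain) and the explicit verification that the annuli of Theorem~\ref{BRS1} eventually encircle~$0$; neither requires the Remark that $g(U)=V$, so the corollary follows purely from the two inclusions of Theorem~\ref{main1} together with the absorbing-annulus structure.
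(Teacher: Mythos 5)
Your proposal is correct and follows essentially the same route as the paper: both inclusions of Theorem~\ref{main1} are used to trap $f^n(U)$ between a Fatou component of $g$ and one of $f$, and the possibility of infinitely many simply connected components of $F(g)$ is excluded by noting that they would all contain $0$ (hence coincide) and exhaust $\C$, contradicting $J(g)\neq\emptyset$. The only difference is cosmetic: you run the sandwich argument in contrapositive form (if $W_n\neq U_n$ then $W_n$ meets $\partial U_n\subset J(f)$), whereas the paper first shows the components $V_n$ are eventually multiply connected and then concludes $V_n\subset\tilde U_n=f^n(U)$.
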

\begin{proof}
For each $n\in \N$, we know that $f^n(U)$ is a multiply connected wandering domain of~$f$, and also that $f^n(U)\subset V_n$, where $V_n$ is a Fatou component of~$g$, by Theorem~\ref{main1}. We claim that there must exist $N\in\N$ such that $V_n$ is multiply connected for $n\ge N$; for otherwise there is a subsequence $(V_{n_j})$ of these Fatou components that are all simply connected and have the properties that
\[
0\in \bigcap_{j=1}^{\infty} V_{n_j}\quad\text{and}\quad \bigcup_{j=1}^{\infty} V_{n_j} = \C,
\]
by Theorem~\ref{Baker1}, which is impossible.

For $n\ge N$ we have $V_n\subset \tilde{U}_n$ where $\tilde U_n$ is a Fatou component of~$f$, by Theorem~\ref{main1} again. Thus, for $n\ge N$, we have $\tilde U_n=f^n(U)$, so $f^n(U)=V_n$ is a Fatou component of~$g$ also.
\end{proof}

The conclusion of Corollary~\ref{shared} is that the transcendental entire functions~$f$ and~$g$ have infinitely many Fatou components in common. In this situation it is tempting to conjecture that their Julia sets must be identical.

Note that if we could deduce in Corollary~\ref{shared} that all the preimages of~$U$ under~$f$ were multiply connected wandering domains of~$g$, then it would follow by Lemma~\ref{bup} that $J(f)\subset J(g)$ and so, by applying a similar argument to preimages under~$g$, that $J(f)=J(g)$.

\end{document}